\newtheorem{thm}{Theorem}[section]
\newtheorem{prop}[thm]{Proposition}
\newtheorem{lem}[thm]{Lemma}
\newtheorem{cor}[thm]{Corollary}
\newtheorem{rem}[thm]{Remark}
\newtheorem{dfn}[thm]{Definition}
\newtheorem{exa}[thm]{Example}
\newtheorem*{Satz*}{Satz}
\newcommand{\mathset}[1]{{\left\{#1\right\}}} 
\newcommand{\absolute}[1]{\left\lvert#1\right\rvert}
\newcommand{\norm}[1]{\left\|#1\right\|}
\DeclareMathOperator{\PGL}{PGL}
\DeclareMathOperator{\supp}{supp}
\DeclareMathOperator{\id}{id}
\DeclareMathOperator{\closure}{cl}
\DeclareMathOperator{\Aut}{Aut}
\DeclareMathOperator{\Out}{Out}
\DeclareMathOperator{\Spec}{Spec}
\title{
Generalised diffusion on  moduli spaces of $p$-adic Mumford
curves
}
\author{Patrick Erik Bradley}
\date{\today}
\begin{document}

\maketitle

\begin{abstract}
  A construction of a pseudo-differential operator on non-archimedean local fields invariant under a finite group action is given together with the solution of the corresponding Cauchy problem. This construction is applied to parts of the Gerritzen-Herrlich Teichm\"uller space in order to obtain a self-adjoint operator whose spectrum can decide about certain properties of the reduction graph of the corresponding Mumford curves.
  \end{abstract}

\section{Introduction}

Pseudo-differential operators on local fields, in particular the field $\mathds{Q}_p$ of $p$-adic numbers, have exensively been studied in the literature. The most prominent one being the Vladimirov operator \cite{VVZ1994} which is given by a conjugation with the Fourier transform of a power of the non-archimedean absolute value. More general Pseudo-differential operators are treated in \cite{Zuniga2017}. For an extension of this theory to other non-archimedean local fields, cf.\ \cite{Kochubei2001}. The aim is often to define a diffusion process
with the help of such a pseudo-differential operator. This is possible, if the
corresponding heat kernel has the property of a Markov semi-group, including positivity. Already \cite{Taibleson1975} considered such heat kernels over arbitrary non-archimedean local fields.

\smallskip
A desideratum is an exension of the theory of pseudo-differential operators to $p$-adic manifolds in such a way that one can read off properties of the manifold from the spectrum of the operator, just like in the case of the Laplace operator on graphs or real manifolds. An example in $p$-adic mathematical physics is the Laplace operator of the Bruhat-Tits tree of $\mathds{Q}_p$ which was used to describe $p$-adic string amplitudes \cite{Zabrodin1989}.

\smallskip
Mumford curves can be viewed as a $p$-adic analogon of Riemann surfaces in that they are special $p$-adic algebraic  curves which  allow a finite covering by holed disks. They have a uniformisation theory, and their topological fundamental group is the free group $F_g$ in $g$ generators, where $g$ is the genus of the  curve. This is a so-called Schottky uniformisation.

\smallskip
There is also an adapted version of Teichm\"uller theory for Mumford curves in which the role of the mapping class group is played by the outer automorphism group of the free group $F_g$. The fixed points and multipliers of $g$ generators of the corresponding Schottky group were found by Gerritzen to yield $3g-3$ coordinates for this non-archimedean Teichm\"uller space.

\smallskip
In this article, we are able to construct pseudo-differential operators on a local field which are invariant  under the action of a given finite group.
The corresponding Cauchy problems are formulated and solved by the help of suitable heat kernels.
This construction is applied to the part of the Gerritzen-Herrlich Teichm\"uller space which uniformises the locus in the moduli space of Mumford curves having a fixed reduction type.

\smallskip
The main result concerning Mumford curves is first the construction of a self-adjoint pseudo-differential operator $H_{\mathfrak{G}}$ parametrised by a real number $\lambda$
on the part $\mathcal{F}(\mathfrak{G})$ of a certain fundamental domain 
of the Gerritzen-Herrlich Teichm\"uller space which represents Schottky  groups leading to Mumford curves with stable reduction graph $\mathfrak{G}$.
This part $\mathcal{F}(\mathfrak{G})$ 
depends on a spanning tree $T$ of $\mathfrak{G}$, and the operator $H_{\mathfrak{G}}$ is invariant under the action of the automorphism group of $\mathfrak{G}$.
Secondly, the spectrum of $H_{\mathfrak{G}}$ satisfies
\[
\Spec H_{\mathfrak{G}}\subset p^{\mathds{Z}}-\lambda
\]
if and only if $\mathfrak{G}$ does not contain a mouth-shaped subgraph
having a corner which is a tip of $T$,
such that at most two edges of $\mathfrak{G}$ outside of $T$ are attached to it.

\smallskip
This result shows that it is possible to identify types of reduction graphs of Mumford curves by the spectrum of pseudo-differential operators.
A future task should be the construction of pseudo-differential operators and stochastic processes on general $p$-adic manifolds.

\bigskip
The following section is a brief introduction to Mumford curves as well as moduli and Teich\-m\"uller spaces for these curves. Section 3 is devoted to the  construction of the invariant pseudo-differential operator and its heat kernel
in a general setting.
In Section 4, this construction is applied to the Teichm\"uller space parametrising Schottky groups responsible for a fixed reduction graph, and thus a proof of the the main theorem is obtained.

\section{Mumford curves and their moduli spaces}

\subsection{Mumford curves}

Let $K$ be a non-archimedean local field. The reader may think of $K$ as a finite extension of the $p$-adic number field $\mathds{Q}_p$. Properties of these can be found in \cite{Gouvea}.

\smallskip
A \emph{Mumford curve}  is a projective algebraic curve $X$ defined over $K$ such that
\[
X=\Omega/\Gamma
\]
where $\Gamma\subset\PGL_2(K)$ is a finitely generated free subgroup acting
on the (non-empty)
complement  $\Omega\subset\mathds{P}^1(K)$ of its limit points.
Such a group is called a \emph{Schottky group}.
It is a fact that the number $g$ of generators of $\Gamma$ coincides with the genus of the curve $X$ \cite{GvP1980}.

\smallskip
A large part of the lecture notes \cite{GvP1980} is devoted to
proving the following result by Mumford \cite{Mumford1972} with analytic methods:

\begin{thm}[Mumford]
  There is a one-to-one correspondence between
  \begin{enumerate}
  \item conjugacy classes of Schottky groups in $\PGL_2(K)$
  \item isomorphism classes of
        Mumford curves over $K$
    \end{enumerate}
\end{thm}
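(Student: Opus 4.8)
The plan is to establish the bijection by constructing maps in both directions and checking that they are mutually inverse, after first disposing of the compatibility with the two equivalence relations. The latter is the easy bookend: conjugating $\Gamma$ by $g\in\PGL_2(K)$ carries $\Omega$ to $g\Omega$ via a $\Gamma$-equivariant M\"obius transformation, hence induces an isomorphism of the quotients, so the isomorphism class of $X=\Omega/\Gamma$ depends only on the conjugacy class of $\Gamma$. Conversely, the $\PGL_2(K)$-action will account for exactly the ambiguity in the reverse construction, so that isomorphic curves produce conjugate groups. The real content is the construction of the two maps.

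For the forward direction, the first step is to show that a Schottky group $\Gamma$ acts freely and discontinuously on the domain $\Omega$ of ordinary points, with proper quotient. Since $\Gamma$ is free and its nontrivial elements are hyperbolic, the limit set $\mathcal{L}=\mathds{P}^1(K)\setminus\Omega$ is compact and totally disconnected, and one exhibits a good fundamental domain bounded by finitely many disks attached to a chosen system of generators (a Schottky figure). The quotient $\Omega/\Gamma$ then acquires a natural structure of proper rigid analytic space. The crucial step is to algebraise this quotient by producing enough $\Gamma$-automorphic functions to embed it in projective space; these are supplied by the theta series
\[
  \theta(a,b;z)=\prod_{\gamma\in\Gamma}\frac{z-\gamma a}{z-\gamma b},
\]
whose convergence rests on the discontinuity of the action. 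Ratios of such theta functions are genuinely $\Gamma$-invariant, descend to meromorphic functions on the quotient, and separate points and tangent directions, yielding a smooth projective curve $X$ over $K$; the number of generators equals the genus by the result cited in the excerpt.

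For the reverse direction, I would start from a Mumford curve $X$ and analyse its stable reduction, which is split degenerate: all irreducible components are rational and all singular points are $K$-rational ordinary nodes. From this reduction data one builds a Bruhat--Tits-type tree modelling the rigid analytic universal covering of $X$, realises that covering as an open subset $\Omega\subset\mathds{P}^1(K)$, and identifies its group of deck transformations as a Schottky group $\Gamma$ with $X\cong\Omega/\Gamma$, the rank of $\Gamma$ coinciding with the genus by the topology of the covering. It then remains to verify that the two constructions invert one another up to the respective equivalences, which reduces to the uniqueness of the universal covering and of the Schottky figure up to the $\PGL_2(K)$-action.

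The hard part will be the algebraisation step, namely passing from the abstract analytic quotient $\Omega/\Gamma$ to an honest projective algebraic curve. This is precisely where the analytic machinery of \cite{GvP1980} is concentrated: establishing convergence and the automorphy factors of the theta functions, proving that their ratios generate the field of $\Gamma$-invariant meromorphic functions, and controlling the reduction so that the resulting curve is smooth, projective, and of the correct genus. By contrast, the equivariance under conjugation, the freeness and discontinuity of the action, and the node structure of the split degenerate reduction are comparatively formal once this analytic core is secured; Mumford's original route via formal schemes would instead glue the rational components along the tree and invoke algebraisation of the formal model, trading the theta-function analysis for a descent argument, but the essential difficulty of passing from analytic to algebraic persists in either approach.
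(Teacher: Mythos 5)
The paper offers no proof of this theorem at all beyond citing Mumford's original article and the analytic treatment in the Gerritzen--van der Put lecture notes, and your sketch is a faithful outline of exactly that cited analytic route: Schottky figure and discontinuity, theta series $\theta(a,b;z)=\prod_{\gamma\in\Gamma}\frac{z-\gamma a}{z-\gamma b}$ with algebraisation of the quotient, and the reverse construction via split degenerate reduction and the tree, with the hard convergence and embedding steps correctly attributed to that machinery. So this is essentially the same approach as the paper's (namely, the proof it invokes from the literature), presented as an accurate summary rather than a new argument.
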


\smallskip
If the local field $K$ is large enough, then a Mumford curve has
\emph{split degenerate reduction}, i.e.\ the reduction consists of genus zero curves and has only nodes as singularities. The corresponding reduction graph is then a finite graph whose first Betti number is $g$. The lecture notes \cite{GvP1980} contain more information on the reduction of algebraic curves over  non-archimedean fields. What is relevant for us is the fact that $\Gamma$ acts on an infinite subtree ${T}$ of the Bruhat-Tits tree $\mathcal{T}_K$
associated with the local field $K$, and that
\[
\mathfrak{G}=\tilde{T}/\Gamma
\]
is a \emph{stable} finite graph, i.e.\ a connected graph in which each node is the boundary of at least three edges (a loop-edge counts as two edges in this consideration). The first Betti number of $\mathfrak{G}$ equals $g$.
An introduction to the Bruhat-Tits tree $\mathcal{T}_K$ can be found in
\cite{Serre1980}.

\smallskip
The  possible stable reduction graphs for the case of Mumford curves of genus $g=2$ are depicted in Figure \ref{3graphs}. Notice that the graph $(b)$ can be obtained from either $(a)$ or $(c)$ by contracting an edge. In general, the number of stable reduction graphs is known to be finite. This follows e.g.\ from the dimension
\[
3g-3-\delta
\]
of the locus of stable curves with $\delta$ nodes in the moduli space of stable curves \cite[Ch.\ 2.C]{HM1998}. This bounds the number of nodes. As a node corresponds to an edge in the reduction graph, it follows that the number of edges in a stable graph is bounded. Hence, the number of possible stable reduction graphs is finite.

\begin{figure}
  \[
  \xymatrix{
    *\txt{$\bullet$}\ar@{-}@(ul,dl)\ar@{-}[r]&*\txt{$\bullet$}\ar@{-}@(ur,dr)
    \\
    &\hspace*{-10mm}(a)
  }
  \hspace*{2cm}
  \xymatrix{
    *\txt{$\bullet$}\ar@{-}@(ul,dl)\ar@{-}@(ur,dr)\\
    (b)
  }
  \hspace*{2cm}
     \xymatrix{
    *\txt{$\bullet$}\ar@{-}[r]\ar@{-}@/^/[r]\ar@{-}@/_/[r]&
    *\txt{$\bullet$}\\
   &\hspace*{-10mm}{(c)}
  }
 \]
\caption{The three possible stable reduction graphs for Mumford curves of genus $g=2$.}\label{3graphs}
\end{figure}
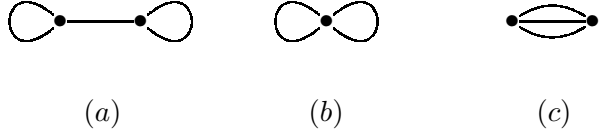

\subsection{Moduli spaces of Mumford curves}

The moduli space $\mathcal{M}_g$  of non-singular projective algebraic curves of genus $g\ge 2$ is a well-studied mathematical object. Its points correspond to isomorphism classes of genus $g$ curves. Let $\mathcal{M}_g(K)$ be the space of $K$-rational points in $\mathcal{M}_g$. These correspond to curves defined over $K$.
The locus $M_g(K)$ of Mumford curves in
$\mathcal{M}_g(K)$ is well-known to be
 an open subspace,
%
%
and
an object of independent interest. In  $p$-adic mathematical physiscs, $M_g$ was used in the context of string theory \cite{CMZ1989}.

\smallskip
The space $M_g$ is the disjoint union of
finitely many parts $M_g(\mathfrak{G})$ with fixed stable reduction graph $\mathfrak{G}$. For example, in the case of genus $g=2$, there are three such parts, each belonging to a graph depicted in Figure \ref{3graphs}.

\smallskip
In order to ensure that each part $M_g(\mathfrak{G})$ is non-empty, the non-archimedean field $K$ must be sufficiently large. The reason is that
the tree $\tilde{T}
$ in the universal covering
\begin{align}\label{unigraph}
\tilde{T}\to\mathfrak{G}=\tilde{T}/\Gamma
\end{align}
needs to be embedded into the Bruhat-Tits tree $\mathcal{T}_K$ in order to obtain a universal covering
\[
\Omega\to X = \Omega/\Gamma
\]
with $X$ a Mumford curve having stable reduction graph $\mathfrak{G}$.
The map (\ref{unigraph}) is obtained with the help of a spanning tree $T$ of $\mathfrak{G}$ as a fundamental domain for the action of the free group $\Gamma$.
Now, if $K$ is not large enough, then $T$ can have more branching than the $\mathcal{T}_K$: we remind that the number of edges attached to a vertex in $\mathcal{T}_K$ is
\[
q+1=p^f+1
\]
where $f$ is the ramification index of the field extension $K/\mathds{Q}_p$ \cite{Serre1980}.
%
%
In that case, $T$ cannot be embedded into $\mathcal{T}_K$, and so the space
$M_g(\mathfrak{G})$ is empty.

\smallskip
We will restrict to unramified extensions of $\mathds{Q}_p$, i.e.\ to field extensions $K/\mathds{Q}_p$ whose degree equals the ramification index $f$. In this case, the prime $p$ is also a uniformiser for $K$ \cite[Ch.\ II]{Neukirch}. This means that every element  $x\in K$ is of the form
\[
x=\sum\limits_{\nu=-m}^\infty x_\nu p^\nu
\]
with $x_\nu\in\mathcal{R}$ for some choice of $p$-adic digits in $\mathds{D}_K$, where
\[
\mathds{D}_K=\mathset{ z\in K\colon\absolute{z}_K\le 1}
\]
is the unit disk.

\subsection{Teichm\"uller spaces for Mumford curves}

The  \emph{Gerritzen-Herrlich Teichm\"uller space} is defined as
  \[
\mathcal{T}_g(K)=\mathset{F_g\stackrel{\text{repr.}}{\longrightarrow}\PGL_2(K)\colon\text{faithful discrete}}/PGL_2(K)
\]
where
$g\ge 2$, and $\text{repr.}$ stands for ``representation''.
The image of a representation in $\mathcal{T}_g(K)$ is a {Schottky group}. Hence, it  consists of hyperbolic M\"obius transformations.

\smallskip
It is known that $\mathcal{T}_g(K)\subset K^{3g-3}$ is an open analytic polyhedron \cite{Gerritzen1981}. Herrlich considered spaces of projective-linear representations of more general groups \cite{Herrlich1987}.

\smallskip
There is an action of the outer automorphism group 
$\Out(F_g)
$
of the free group $F_g$ in $g$ generators on $\mathcal{T}_g(K)$
given by bi-analytic maps.
The quotient space

\[
M_g(K)=\mathcal{T}_g(K)/\Out(F_g)
\]
is the moduli space of Mumford curves of genus $g$ \cite{Gerritzen1981}.

  \begin{thm}[Gerritzen]\label{fundamental}
There is
    a fundamental domain $\mathcal{F}\subset \mathds{D}_K^{3g-3}$ which is the disjoint union of pieces $\mathcal{F}(\mathfrak{G})$ with stable reduction graph $\mathfrak{G}$ such that
    \[
M_g=\bigcup\limits_{\mathfrak{G}}\mathcal{F}(\mathfrak{G})/\Aut(\mathfrak{G})
\]
  \end{thm}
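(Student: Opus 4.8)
The plan is to exploit the dictionary between points of $\mathcal{T}_g(K)$ and \emph{marked} Schottky groups, a marking being an identification of $F_g$ with the fundamental group of the reduction graph $\mathfrak{G}=\tilde T/\Gamma$ via a chosen set of free generators. Under this dictionary the action of $\Out(F_g)$ is precisely the change-of-marking action, so that $M_g=\mathcal{T}_g(K)/\Out(F_g)$ forgets the marking and retains only the conjugacy class of $\Gamma$ in $\PGL_2(K)$, equivalently, by Mumford's correspondence recalled above, the isomorphism class of the Mumford curve. This is the non-archimedean counterpart of the action of $\Out(F_g)$ on Culler--Vogtmann Outer space, and I would organise the proof along the same combinatorial lines: stratify by the underlying graph, choose a slice in each stratum, and read off the residual symmetry.

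First I would observe that the combinatorial type of the stable graph $\mathfrak{G}$ depends only on the conjugacy class of the image $\Gamma\subset\PGL_2(K)$, hence only on the $\Out(F_g)$-orbit of a representation, so that the reduction type is constant on orbits. This yields an $\Out(F_g)$-invariant stratification $\mathcal{T}_g(K)=\bigsqcup_{\mathfrak{G}}\mathcal{T}_g(\mathfrak{G})$, with $M_g(\mathfrak{G})=\mathcal{T}_g(\mathfrak{G})/\Out(F_g)$; the index set is finite by the bound on the number of stable graphs established above. It then suffices to produce, for each fixed $\mathfrak{G}$, a fundamental domain $\mathcal{F}(\mathfrak{G})$ for the $\Out(F_g)$-action on the single stratum $\mathcal{T}_g(\mathfrak{G})$ with residual symmetry $\Aut(\mathfrak{G})$, and to set $\mathcal{F}=\bigsqcup_{\mathfrak{G}}\mathcal{F}(\mathfrak{G})$ (the pieces being automatically disjoint, as the reduction type is an invariant of the point).

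The central step is the construction of $\mathcal{F}(\mathfrak{G})$ from a fixed spanning tree $T$ of $\mathfrak{G}$. The $g$ edges of $\mathfrak{G}$ outside $T$ single out a canonical set of free generators of $\pi_1(\mathfrak{G})\cong F_g$, hence a canonical $T$-marking of every Schottky group in the stratum; I would take $\mathcal{F}(\mathfrak{G})$ to be the locus carrying this $T$-marking, normalised as below. Two facts must then be checked, via Bass--Serre theory for the action of $\Gamma$ on its minimal invariant subtree $\tilde T\subset\mathcal{T}_K$: that after conjugation every $\Gamma$ with reduction $\mathfrak{G}$ admits the $T$-marking, so that $\mathcal{F}(\mathfrak{G})$ meets every $\Out(F_g)$-orbit; and that two points of $\mathcal{F}(\mathfrak{G})$ yield isomorphic Mumford curves exactly when they are related by the natural action of $\Aut(\mathfrak{G})$, which permutes the edges of $\mathfrak{G}$ and hence the Gerritzen coordinates attached to them. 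Together these identify the residual symmetry with $\Aut(\mathfrak{G})$ and give $M_g(\mathfrak{G})=\mathcal{F}(\mathfrak{G})/\Aut(\mathfrak{G})$, whence the asserted decomposition upon assembling the strata.

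To place $\mathcal{F}$ inside the unit polydisk $\mathds{D}_K^{3g-3}$, I would use the $\PGL_2(K)$-conjugation freedom to normalise the first fixed points to $0$, $\infty$ and $1$, and then read off from the embedding $\tilde T\hookrightarrow\mathcal{T}_K$ that the multipliers of the hyperbolic generators satisfy $\absolute{m_i}_K<1$ and that the remaining fixed-point coordinates may be taken in $\mathds{D}_K$; this is the content of Gerritzen's coordinatisation \cite{Gerritzen1981}. I expect the main obstacle to be the combinatorial bookkeeping of the central step, namely proving that the residual symmetry is \emph{exactly} $\Aut(\mathfrak{G})$, neither larger nor smaller: this requires matching changes of fundamental domain and marking on the group-theoretic side with graph automorphisms on the combinatorial side, the non-archimedean analogue of computing simplex stabilisers in Outer space.
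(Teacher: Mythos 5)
The paper's own ``proof'' of this theorem is a one-line citation to Satz 1 of \cite{Gerritzen1981}, followed (outside the proof environment) by a sketch of the construction: a spanning tree $T$ of $\mathfrak{G}$, lasso-loop generators attached to the $g$ edges outside $T$, and fixed points and multipliers normalised to $(0,1,t_1;\infty,y_2,t_2;x_3,y_3,t_3,\dots,x_g,y_g,t_g)$ inside $\mathds{D}_K^{3g-3}$. Your outline reconstructs exactly this strategy --- stratify $\mathcal{T}_g(K)$ by reduction type, cut out a $T$-marked slice in each stratum, normalise into the polydisk --- so in spirit you and the paper follow the same route, and you even end up deferring the coordinatisation to the same reference.

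As a proof, however, your proposal has a genuine gap, and it sits precisely where you say you ``expect the main obstacle'': the two facts you defer --- that the $T$-marked slice meets every $\Out(F_g)$-orbit in its stratum, and that the residual identifications on the slice are exactly $\Aut(\mathfrak{G})$ --- are the entire content of Gerritzen's Satz 1, and nothing in your text closes them. Moreover, your parenthetical description of the residual action (``permutes the edges of $\mathfrak{G}$ and hence the Gerritzen coordinates attached to them'') is inaccurate in a way that would derail the bookkeeping you propose. By Gerritzen's Satz 4, quoted later in the paper, a graph automorphism acts on a geometric basis by $\sigma(w_i)=w_{t_1}\cdots w_{t_s}w_{j_1}\cdots w_{j_r}w_{t_s}^{-1}\cdots w_{t_1}^{-1}$, i.e.\ by conjugated words, not by a permutation of the $w_i$; the induced action on the coordinates $(x_i,y_i,t_i)$ is therefore by nontrivial bi-analytic maps. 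The paper's genus-$2$ computation in Section 4(c) makes this concrete: the automorphism sending $w_2\mapsto w_2^{-1}w_1$ transforms $(0,1,t_1;\infty,-1,t_2)$ into $(0,1,t_1;\infty,\eta,t_1t_2)$ after renormalising by a M\"obius transformation --- visibly not a permutation of coordinate triples, and this non-permutation behaviour is exactly what the paper's main theorem later exploits. So the slice-stabiliser matching you describe as ``combinatorial bookkeeping'' is in fact an analytic verification, and it remains open in your write-up; what you have is a correct plan whose hardest steps are still outsourced to the very theorem being proved.
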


  \begin{proof}
Cf.\ \cite[Satz 1]{Gerritzen1981}.
  \end{proof}

  For the proof of Theorem \ref{fundamental},
  Gerritzen constructs $\mathcal{F}$ from a spanning tree $T$ of $\mathfrak{G}$. Each of the $g$ edges in $\mathfrak{G}$ outside $T$ are part of a basis of the fundamental group of $\mathfrak{G}$ (lasso loops) which corresponds to $g$ generators of a Schottky group inside $\PGL_2(K)$.
  The fixed points and multipliers of these generators are the coordinates of the embedding $\mathcal{F}(\mathfrak{G})$ into $\mathds{D}_K^{3g-3}$ after
  bringing them into the form
  \[
x=(0,1,t_1;\infty,y_2,t_2;x_3,y_3,t_3,\dots,x_g,y_g,t_g)
\]
with
\[
0<\absolute{t_i}_K<1,\quad y_2,x_i,y_g\in\mathds{D}_K,\quad i=3,\dots,g
\]
with the help of a suitable M\"obius transformation.

\smallskip
We will also write
\[
\dot{\mathds{D}}_K^-=\mathset{x\in K\colon 0<\absolute{x}_K<1}
\]
for the maximal punctured disk of radius less than $1$.

\section{A general setting}\label{extension}

 Let $X\subset \mathds{D}_K^N$ be an open compact set, and
$G$ a finite group acting on $X$ via bi-analytic maps.
 We will extend $\sigma\in G$ to all of $K^N$ in three steps.

 \smallskip
 The norm on $K^N$ will be denoted as
 \[
\norm{x}_K=\max\mathset{\absolute{x_1}_K,\dots,\absolute{x_N}_K}
\]
where
$x=(x_1,\dots,x_N)\in K^N$.

\smallskip
In the case of the Gerritzen-Herrlich Teichm\"uller space,
we have
\[
N= 3g-3
\]
Also, in this case,
first extend $\sigma\in\Aut\mathfrak{G}$ to $\partial\mathcal{F}(\mathfrak{G})$
in the natural way. Then we can set 
     \begin{align*}
       X &= \closure_{K^N}\mathcal{F}(\mathfrak{G})\\
       G&=\Aut(\mathfrak{G})
     \end{align*}
After this,  extend $\sigma\in G$ to $K^N\setminus\closure_{K^N}\mathcal{F}(\mathfrak{G})$ in the three steps to follow.

\begin{enumerate}
  \item Let $\sigma=\id$ on $\mathds{D}_K^N\setminus X$.
  \item Let $\rho>>0$. Define
    \begin{align*}
B_\rho(0)&=\mathset{x\in\mathds{K}^N\colon \norm{x}_K\le p^\rho}
    \end{align*}
    and let
    \begin{align*}
s_\rho\colon&B_\rho(0)\to\mathds{D}_K^N,\;x\mapsto p^\rho x
    \end{align*}
be a rescaling operator.    Then let  on $B_\rho(0)\setminus\mathds{D}_K^N$
    \[
\sigma\colon x\mapsto s_\rho^{-1}\sigma s_\rho(x)
\]
be the extension.
\item Let $\sigma=\id$ on $K^N\setminus B_\rho(0)$.
  \end{enumerate}

Notice that we have used the symbol $\sigma$ for the original map on $X$ as well as for its extension to $K^N$. This should not be a cause for confusion, as in the following we will use only the extension.

\subsection{Twisted heat kernel}
   Let $G$ be a finite group acting on $K^N$ (e.g.\ whose  action is obtained as in the previous section) and let $\sigma\in G$.

  \begin{dfn}
A function    $f\colon K^N\to \mathds{R}$ is called \emph{$\sigma$-radial}, if
    \[
\norm{\sigma x}_K=\norm{\sigma y}_K\;\Rightarrow\;f(x)=f(y)
\]
  \end{dfn}

Observe that an $\id$-radial function is the same as a radial function.

    \begin{dfn}
        Let $f\colon K^N\to\mathds{R}$ be $\sigma$-radial.  
      $f$ is called \emph{$\sigma$-increasing}, if
      \[
\norm{\sigma x}_K\le\norm{\sigma y}_K\;\Rightarrow\; f(x)\le f(y)
      \]
      \end{dfn}

    Observe that an $\id$-increasing function
    is the same as an increasing (or non-decreasing) function.

    \bigskip
    Let $f\colon K^N\to\mathds{R}$, and $\sigma\in G$.
    Then we can define a new function
  \[
f_\sigma\colon K^N\to\mathds{R},\;x\mapsto f(\sigma x)
\]

  \begin{lem}
    The following holds true:
      \begin{enumerate}
      \item If $f$ is radial, then $f_\sigma$ is $\sigma$-radial.
      \item If $f$ is also increasing, then $f_\sigma$ is $\sigma$-increasing.
        \end{enumerate}
  \end{lem}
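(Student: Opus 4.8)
The plan is to observe that $f_\sigma=f\circ\sigma$ and that the notions of \emph{$\sigma$-radial} and \emph{$\sigma$-increasing} are precisely the pullbacks of the ordinary (that is, $\id$-) radial and increasing conditions along the map $\sigma$. Both assertions then follow from the single substitution $u=\sigma x$, $v=\sigma y$, with no further structural property of $\sigma$ required.

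For part (1), I would start from the hypothesis $\norm{\sigma x}_K=\norm{\sigma y}_K$ and set $u=\sigma x$, $v=\sigma y$, so that $\norm{u}_K=\norm{v}_K$. Since $f$ is radial, this yields $f(u)=f(v)$, that is $f(\sigma x)=f(\sigma y)$, which is exactly $f_\sigma(x)=f_\sigma(y)$. Hence $f_\sigma$ is $\sigma$-radial. Note that this argument uses only radiality of $f$, consistent with it being stated as the weaker hypothesis.

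For part (2), I would argue identically but with inequalities: assuming $\norm{\sigma x}_K\le\norm{\sigma y}_K$ and putting $u=\sigma x$, $v=\sigma y$ gives $\norm{u}_K\le\norm{v}_K$; the assumption that $f$ is increasing then gives $f(u)\le f(v)$, i.e.\ $f_\sigma(x)\le f_\sigma(y)$, so that $f_\sigma$ is $\sigma$-increasing.

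The only point that requires care is that the definitions of $\sigma$-radial and $\sigma$-increasing are written with the \emph{same} $\sigma$ that defines $f_\sigma$, so that the norm expressions $\norm{\sigma x}_K$ occurring in the hypotheses line up exactly with the arguments $\sigma x$ fed into $f$ in the definition of $f_\sigma$. Once this alignment is recorded, the proof is a purely formal unwinding of definitions; in particular no property of $\sigma$ (such as being a norm-preserving bijection) is needed, and I do not expect any genuine obstacle.
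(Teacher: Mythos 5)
Your proof is correct and matches the paper's approach: the paper simply declares the lemma ``immediate,'' and your substitution $u=\sigma x$, $v=\sigma y$ is exactly the definitional unwinding that justifies this. Nothing more is needed, and your observation that no structural property of $\sigma$ is used is accurate.
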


      \begin{proof}
This is immediate.
        \end{proof}

Now, let $f\colon K^N\to\mathds{R}$ be a function satisfying the following conditions:
  \begin{enumerate}
  \item $f$ is radial
  \item $f(x)=\lambda$ for $x\in\mathds{D}_K^N$
  \item $f$ is increasing
  \item There are $A_1,A_2,\gamma_1,\gamma_2>0$ s.t.
    \[
A_1\norm{x}_K^{\gamma_1}\le f(x)\le A_2\norm{x}_K^{\gamma_2}\qquad (\norm{x}_K>>1)
\]
    \end{enumerate}

  We will write $dx$ for the Haar measure on $K^N$ which is normalised such
  that the measure of the unit ball is $1$.
  The function $\chi\colon K\to\mathds{C}^\times$ will denote
  the {standard additive character} on $K^N$.
And $S(K^N)$ means the space of test functions. These consist of complex-valued functions on $K^N$ which are locally constant with compact support.
    
\medskip
  Let
  \begin{align*}
    F\colon S(K^N)&\to S(K^N),\;f\mapsto Ff
  \end{align*}
  with
  \begin{align*}
    f(y)&=\int\limits_{K^N}\chi(y\cdot x) f(x)\,dx
    &(y\in K^N)
\end{align*}
  be the Fourier transform.
  We will sometimes write
  \[
F_{x\to y}
\]
instead of $F$ in order to make clear the variables on both sides of the transformation map.

        \begin{dfn}
        Let $\sigma\in G$. The expression
    \[
Z_\sigma(x,t)=F^{-1} e^{-t(f_\sigma(\cdot)-\lambda)}(x)
\]
with $x\in K^N$
is a \emph{twisted heat kernel}.
    \end{dfn}

        We are interested in the following properties of a function
        \[
Z\colon K^N\times \mathds{R}_{\ge 0}\to \mathds{C}
\]
Namely,
\begin{enumerate}
    \item $\supp(Z(\cdot,t))\subset\mathds{D}_K^N$ for $t>0$
            \item $Z(x,t)\in C(K^N,\mathds{R})\cap L_1(K^N)\cap L_2(K^N)$ for $t>0$.
    \item $\int\limits_{K^N}Z(x,t)\,dx =1$ for $t>0$
    \item $\lim\limits_{t\to 0^+}Z(x,t)*\psi(x)=\psi(x)$, $\psi\in S(K^N)$            \item $Z(x,t+t')=Z_\sigma(x,t)*Z_\sigma(x,t')$ for $t,t'>0$
              \end{enumerate}

          \begin{rem}
          If one also adds the following property:
          
          \begin{itemize}
                 \item[6.] $Z(x,t)\ge 0$ for $t>0$
          \end{itemize}
          then one can associate to $Z$ a sochastic process.
However, as in this article we are not interested in stochastic processes, but more in characterising parts of the moduli space via the spectrum of an associated self-adjoint operator, we will not require the last property, i.e.\ positivity of the kernel $Z$. 
          \end{rem}

  \begin{lem}\label{ZsigmaProperties}
    Let $\sigma\in G$. Then  $Z_\sigma(x,t)$ has the  properties 1.\ to 5.
    If $\sigma=\id$, then also property 6.\ holds true.
  \end{lem}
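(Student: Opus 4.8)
The plan is to work entirely on the Fourier side. Writing $g_\sigma(\xi,t)=e^{-t(f_\sigma(\xi)-\lambda)}$ so that $Z_\sigma(\cdot,t)=F^{-1}g_\sigma(\cdot,t)$, I would first record two elementary bounds on $g_\sigma$. Since $f$ is increasing and equals $\lambda$ on $\mathds{D}_K^N$, one gets $f\ge\lambda$ everywhere, hence $f_\sigma=f\circ\sigma\ge\lambda$ and therefore $0<g_\sigma\le 1$. Moreover, for $\norm{\xi}_K>p^\rho$ the extension gives $\sigma=\id$, so $f_\sigma(\xi)=f(\xi)\ge A_1\norm{\xi}_K^{\gamma_1}$ and $g_\sigma$ decays faster than any power; in particular $g_\sigma(\cdot,t)\in L_1(K^N)\cap L_2(K^N)$ for $t>0$. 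These two facts already dispatch several properties: property 3 follows from $\int_{K^N}Z_\sigma\,dx=g_\sigma(0,t)=e^{-t(f_\sigma(0)-\lambda)}=1$ because $0\in\mathds{D}_K^N$; property 5 is the convolution theorem together with $g_\sigma(\cdot,t)\,g_\sigma(\cdot,t')=g_\sigma(\cdot,t+t')$; and property 4 follows by dominated convergence on the Fourier side, since $F[Z_\sigma(\cdot,t)*\psi]=g_\sigma(\cdot,t)\hat\psi\to\hat\psi$ pointwise as $t\to 0^+$, dominated by the compactly supported $\absolute{\hat\psi}$.

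The crux is property 1. I would prove it through the standard non-archimedean duality: $\supp F^{-1}g\subseteq\mathds{D}_K^N$ if and only if $g$ is invariant under translation by $\mathds{D}_K^N$, because the standard character $\chi$ is trivial on $\mathds{D}_K$ and $\mathds{D}_K^N$ is its own annihilator under the pairing. Thus everything reduces to showing that $f_\sigma$, and hence $g_\sigma$, is $\mathds{D}_K^N$-periodic, which I would verify region by region in $\xi$ for $u\in\mathds{D}_K^N$. If $\norm{\xi}_K\le 1$ then $\xi+u,\xi\in\mathds{D}_K^N$, both are sent by $\sigma$ into $\mathds{D}_K^N$, and $f_\sigma=\lambda$ on both. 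If $\norm{\xi}_K>p^\rho$ then $\norm{\xi+u}_K=\norm{\xi}_K$ by ultrametricity, $\sigma=\id$ on this range, and radiality of $f$ gives equality. The remaining annulus $1<\norm{\xi}_K\le p^\rho$ is where the rescaled definition $\sigma=s_\rho^{-1}\sigma s_\rho$ enters: writing $v=p^\rho\xi\in\mathds{D}_K^N$ and $w=p^\rho u$ with $\norm{w}_K\le p^{-\rho}$, and using that a bi-analytic self-map of $\mathds{D}_K^N$ is $1$-Lipschitz, one obtains $\norm{\sigma(v+w)-\sigma(v)}_K\le\norm{w}_K\le p^{-\rho}$. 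A short ultrametric case distinction on whether $\norm{\sigma(v)}_K$ exceeds $p^{-\rho}$ then shows that $p^{-\rho}\sigma(v+w)$ and $p^{-\rho}\sigma(v)$ either have equal norm or both lie in $\mathds{D}_K^N$, so that $f$ agrees on them; this is exactly the point at which $\rho\gg 0$ is needed. With property 1 in hand, property 2 follows: $g_\sigma\in L_1$ makes $Z_\sigma$ continuous, $g_\sigma\in L_2$ makes $Z_\sigma\in L_2$ by Plancherel, and continuity together with the now-compact support yields $Z_\sigma\in L_1$; real-valuedness comes from the symmetry $f_\sigma(-\xi)=f_\sigma(\xi)$, checked by the same region-wise argument.

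Finally, for property 6 in the case $\sigma=\id$, I would use that $f_{\id}=f$ is genuinely radial and increasing, so that $g(\xi,t)$ is a radial function that is non-increasing in $\norm{\xi}_K$ and equal to $1$ on $\mathds{D}_K^N$. Such a function can be written as a superposition $g=\sum_{k\ge 0}c_k\,\mathds{1}_{\{\norm{\xi}_K\le p^k\}}$ with non-negative coefficients $c_k$ given by the successive jumps of the radial profile. Since $F^{-1}\mathds{1}_{\{\norm{\xi}_K\le p^k\}}$ is a positive multiple of $\mathds{1}_{\{\norm{x}_K\le p^{-k}\}}$, the kernel $Z=F^{-1}g$ is a non-negative combination of indicator functions, hence $Z\ge 0$; this also re-confirms the support statement in the untwisted case. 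I expect the main obstacle to be precisely the annulus step of property 1, namely controlling $\norm{\sigma(v+w)}_K$ against $\norm{\sigma(v)}_K$, which forces the choice $\rho\gg 0$. Positivity is special to $\sigma=\id$ because for a non-trivial $\sigma$ the level sets of $g_\sigma$ are the sets $\sigma^{-1}(\{\norm{\cdot}_K\le p^k\})$, which are no longer balls, so that the inverse Fourier transforms of their indicators need not be non-negative, in agreement with the preceding remark.
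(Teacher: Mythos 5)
Your overall strategy is the same as the paper's (both follow the \cite{CR2015} template on the Fourier side): properties 3 and 5 are obtained identically; your dominated-convergence argument for property 4 and your decomposition of the radial, non-increasing symbol into a non-negative combination of indicators of balls for property 6 are correct, self-contained substitutes for the paper's appeals to \cite{CR2015}. For property 1, your duality criterion ($\supp F^{-1}g_\sigma\subset\mathds{D}_K^N$ iff $g_\sigma$ is invariant under translations by $\mathds{D}_K^N$) is precisely the content hidden in the paper's argument: the vanishing of $\int_{\norm{\xi}_K=p^\nu}\chi(-x\cdot\xi)\,d\xi$ yields $Z_\sigma(x,t)=0$ only once one knows that $e^{-tf_\sigma}$ is constant on cosets $\xi+\mathds{D}_K^N$, which is exactly the statement you set out to prove. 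So you have correctly isolated the crux that the paper passes over.

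The gap is in your verification of that coset-invariance on the annulus, namely the claim that ``a bi-analytic self-map of $\mathds{D}_K^N$ is $1$-Lipschitz.'' First, the map being rescaled is not an analytic self-map of the polydisk: it is $\sigma$ on $X$ glued with the identity on $\mathds{D}_K^N\setminus X$, hence only piecewise analytic. Second, and more seriously, a bi-analytic self-bijection of an open compact $X$ need not be $1$-Lipschitz, already for $N=1$: take $X=p\mathds{Z}_p\sqcup(1+p^2\mathds{Z}_p)$ and the involution sending $x\mapsto 1+px$ on the first ball and $y\mapsto(y-1)/p$ on the second; it is bi-analytic but expands the second ball by the factor $p$. (The Schwarz/Gauss-norm argument you have in mind needs power-series coefficients in $O_K$, equivalently sup-norm at most $1$ over the algebraic closure; preservation of $K$-points does not give this -- e.g.\ $x\mapsto(x^p-x)/p$ maps $\mathds{Z}_p$ into $\mathds{Z}_p$ and expands distances.) With only a Lipschitz constant $p^c>1$, your ultrametric case distinction leaves the window $1<\norm{\sigma(\xi)}_K\le p^c$ uncovered, and invariance can genuinely fail there: in the example above, taking $\xi=p^{-\rho}$ one computes $\sigma(\xi+u)=u/p$ for $u\in\mathds{Z}_p$, so $f_\sigma$ takes both values $\lambda$ and $\tilde{f}(p)$ on the single coset $\xi+\mathds{Z}_p$, and the support property is violated. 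Thus property 1 requires a non-expansiveness (isometry-type) hypothesis on the action, or a more careful extension; it is not rescued by taking $\rho\gg 0$ alone. The same annulus problem undercuts your parenthetical claim that $f_\sigma(-\xi)=f_\sigma(\xi)$ ``by the same region-wise argument'' (nothing relates $\sigma(-v)$ to $\sigma(v)$ there), which you need for real-valuedness in property 2. In fairness, the paper's own proof is silent on both of these points, so your attempt fails exactly where the published argument is vacuous; but as written, your proof of properties 1 and 2 has this hole.
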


  \begin{proof}
    We follow the outline of the proof in \cite{CR2015}, where the properties were proven for $Z_{\id}(x,t)$ in the case of $K=\mathds{Q}_p$.

    \smallskip\noindent
    1. We have
    \[
    Z_\sigma(x,t)=\int\limits_{\norm{\xi}_K\le 1}\chi(-x\cdot\xi)\,d\xi
    +e^{\lambda t}\int\limits_{\norm{\xi}_K>1}\chi(-x\cdot \xi)e^{-tf_\sigma(\xi)}\,d\xi
    \]
    The first integral is the Fourier transform of the indicator function of the unit ball, which again is  the same indicator function. For the second integral, note that if $\norm{x}_K>1$, then
    \[
\int\limits_{\norm{\xi}_K=p^\nu}\chi(-x\cdot \xi)\,d\xi=0
\]
for $\nu>0$. Hence,
\[
Z_\sigma(x,t)=0
\]
for $\norm{x}_K>1$.
    
    \smallskip\noindent
    2. As for $\rho\ge 1$ and fixed $t>0$, we have
    \[
e^{-t f_\sigma}\in L_\rho(K^N)
\]
(the proof of \cite[Lemma 3.2]{CR2015} carries over), this  assertion follows immediately.

\smallskip\noindent
3. This follows from 2.\ by the inversion formula for the Fourier transform.

  \smallskip\noindent
  4. From 3.\ it follows that
  \[
  \int\limits_{K^N}Z_\sigma(x-\xi)\phi(\xi)\,d\xi-\phi(\xi)
  =\int\limits_{K^N} Z_\sigma(x-\xi)(\phi(\xi)-\phi(x))\,d\xi
  \]
  The  proof of \cite[Lemma 3.6 (iii)]{CR2015} now carries over to this case.

  \smallskip\noindent
  5. By 2.\ this property is equivalent to
  \[
FZ_\sigma(x,t)\cdot  FZ_\sigma(x,t')=FZ_\sigma(x,t+t')
\]
As
\[
FZ_\sigma(\cdot,\tau)=e^{-\tau (f_\sigma-\lambda)}
\]
the property follows.

\smallskip\noindent
6. If $\sigma=\id$, then $Z_\sigma$ coincides with $Z$ from \cite{CR2015} for $K=\mathds{Q}_p$. The proof of property 6.\ carries over to the case of our more general $K$.
    \end{proof}

  We now define the following linear operators:
  
  \begin{align*}
    J_\sigma\psi(x)&=F^{-1}_{\xi\to x}(f_\sigma(\xi)F_{y\to \xi}\psi(y))\\
    H_\sigma\psi&=J_\sigma\psi-\lambda\psi
  \end{align*}

  \begin{lem}
    It holds true that 
      \begin{enumerate}
      \item $J_\sigma\colon S(K^N)\to S(K^N)$ is a homeomorphism.
      \item $H_\sigma\colon S(K^N)\to S(K^N)$ is continuous and self-adjoint.
        \end{enumerate}
    \end{lem}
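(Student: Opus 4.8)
The plan is to recognise that $J_\sigma$ is simply the multiplication operator by the symbol $f_\sigma$ conjugated by the Fourier transform, i.e.\ $J_\sigma=F^{-1}\circ M_{f_\sigma}\circ F$ with $M_{f_\sigma}\colon g\mapsto f_\sigma\cdot g$, and to exploit that $F$ (hence $F^{-1}$) is a homeomorphism of $S(K^N)$. First I would check that $f_\sigma$ is locally constant: since $f$ is radial it is constant on each sphere $\norm{\cdot}_K=p^\nu$, which is open and closed, and it equals $\lambda$ on the open ball $\mathds{D}_K^N$; hence $f$ is locally constant on all of $K^N$, and composing with the continuous map $\sigma$ yields that $f_\sigma=f\circ\sigma$ is locally constant as well. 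Consequently, for $g\in S(K^N)$ the product $f_\sigma\cdot g$ is again locally constant with $\supp(f_\sigma g)\subset\supp g$, so $M_{f_\sigma}$ maps $S(K^N)$ into itself; and since on each finite-dimensional layer of the Schwartz--Bruhat space (functions supported in a fixed ball and constant modulo a fixed sublattice) multiplication by a locally constant function is a linear map between finite-dimensional spaces, $M_{f_\sigma}$ is continuous.

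For the homeomorphism assertion I would then establish that $M_{f_\sigma}$ is invertible. The decisive point is that $f_\sigma$ is nowhere vanishing: by monotonicity together with $f|_{\mathds{D}_K^N}=\lambda$ one has $f\ge\lambda$ everywhere, and in the present setting $f$ takes values in $p^{\mathds{Z}}$, so $f\ge\lambda>0$ and thus $f_\sigma>0$. Therefore $1/f_\sigma$ is again locally constant and nowhere zero, so $M_{1/f_\sigma}$ maps $S(K^N)$ continuously into itself and is a two-sided inverse of $M_{f_\sigma}$. Hence $M_{f_\sigma}$ is a homeomorphism of $S(K^N)$, and as $F$ and $F^{-1}$ are homeomorphisms of $S(K^N)$, so is the composite $J_\sigma=F^{-1}M_{f_\sigma}F$. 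This settles part~1.

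For part~2, continuity of $H_\sigma=J_\sigma-\lambda\,\id$ is immediate from the continuity of $J_\sigma$ just obtained together with that of scalar multiplication. For self-adjointness I would invoke that, under the self-dual normalisation of the additive character $\chi$ and the Haar measure, $F$ is unitary on $L_2(K^N)$ with $F^{*}=F^{-1}$, and that $f_\sigma$ is real-valued. Concretely, for $\psi,\phi\in S(K^N)$,
\[
\langle J_\sigma\psi,\phi\rangle=\langle M_{f_\sigma}F\psi,F\phi\rangle=\int_{K^N}f_\sigma(\xi)\,F\psi(\xi)\,\overline{F\phi(\xi)}\,d\xi=\langle F\psi,M_{f_\sigma}F\phi\rangle=\langle\psi,J_\sigma\phi\rangle,
\]
so $J_\sigma$ is symmetric on $S(K^N)$; since $\lambda$ is real, subtracting $\lambda\,\id$ preserves this, giving $\langle H_\sigma\psi,\phi\rangle=\langle\psi,H_\sigma\phi\rangle$.

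The main obstacle I anticipate is the invertibility step in part~1, which is precisely where the hypotheses on $f$ are genuinely needed: the local constancy of $f_\sigma$ (to keep $M_{1/f_\sigma}$ inside $S(K^N)$) and, above all, the nowhere-vanishing of $f_\sigma$, which rests on the positivity of $f$ coming from conditions~2 and~3. Verifying continuity in the Schwartz--Bruhat topology also requires a little care, but it reduces to linear algebra on the finite-dimensional layers of $S(K^N)$; the self-adjointness of $H_\sigma$ is then routine once Plancherel's theorem is available.
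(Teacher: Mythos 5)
Your proposal is correct and takes essentially the same approach as the paper, whose proof consists of exactly your two observations stated without any detail: $J_\sigma=F^{-1}M_{f_\sigma}F$ is a composition of homeomorphisms of $S(K^N)$, hence a homeomorphism, and $H_\sigma$ is self-adjoint because it is the Fourier conjugate of multiplication by a real-valued function (plus a real multiple of the identity). One caveat: your argument for the non-vanishing of $f_\sigma$ invokes that $f$ takes values in $p^{\mathds{Z}}$, which holds in the paper's later application to $\mathcal{F}(\mathfrak{G})$ but is not among the four conditions imposed on $f$ in the general setting where the lemma is stated; some such assumption (e.g.\ $\lambda>0$) is genuinely necessary for $M_{f_\sigma}$ to be invertible on $S(K^N)$ --- a point the paper's own proof passes over in silence --- so your treatment is, if anything, the more careful one.
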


  \begin{proof}
    1. The linear operator $J_\sigma$ is a composition of homeomorphisms $S(K^N)\to S(K^N)$. Hence, it also is a homeomorphism.

    \smallskip\noindent
    2. \emph{Continuity}. This is an immediate consequence of $1$.

    \smallskip
    \emph{Self-adjointness.} $H_\sigma$ is obtained by conjugating a real-valued function with the  Fourier transform. This shows that it is self-adjoint.
  \end{proof}

  \begin{rem}
Observe that $J_{\id}$ and $H_{\id}$ are the operators $J$ and $H$ from \cite{CR2015}.
  \end{rem}
  
        \begin{thm}\label{CPsigma}
      Let $\sigma\in G$. Then the Cauchy problem
      \begin{align*}
        \frac{\partial u_\sigma(x,t)}{\partial t}&+H_\sigma u_\sigma(x,t)=0\\
        u_\sigma(x,0)&=\psi(x)\in S(K^N),&x\in K^N,\;t>0
      \end{align*}
      is solved by
      \[
u_\sigma(x,t)=Z_\sigma(x,t)*\psi(x)
\]
This function is continuously differentiable in time for $t\ge 0$ (pointwise).
    \end{thm}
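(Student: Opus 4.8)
The plan is to pass to the Fourier side, where the pseudo-differential operator $H_\sigma$ becomes multiplication by the symbol $f_\sigma-\lambda$ and the heat equation collapses into a first-order linear ordinary differential equation in $t$. Concretely, I would first invoke the convolution theorem together with the identity
\[
FZ_\sigma(\cdot,t)=e^{-t(f_\sigma-\lambda)}
\]
already recorded in the proof of Lemma \ref{ZsigmaProperties}; since $Z_\sigma(\cdot,t)\in L_1(K^N)\cap L_2(K^N)$ by that same lemma and $\psi\in S(K^N)$, this yields
\[
Fu_\sigma(\cdot,t)=e^{-t(f_\sigma-\lambda)}\,F\psi .
\]
This is the object I would differentiate and compare against the symbol of $H_\sigma$.

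The decisive simplification is non-archimedean: because $\psi\in S(K^N)$, its Fourier transform $\hat\psi:=F\psi$ again lies in $S(K^N)$ and hence is locally constant with compact support, say $\supp\hat\psi\subset B_R(0)$. Applying $F^{-1}$ therefore expresses the solution as the \emph{compactly supported} oscillatory integral
\[
u_\sigma(x,t)=\int\limits_{B_R(0)}\chi(-x\cdot\xi)\,e^{-t(f_\sigma(\xi)-\lambda)}\,\hat\psi(\xi)\,d\xi .
\]
On the compact domain $B_R(0)$ the symbol $f_\sigma$ is bounded and locally constant, so the integrand is smooth in $t$ with all $t$-derivatives bounded uniformly for $t$ in any interval $[0,T]$. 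This reduces every analytic question --- differentiability, the equation, and the initial condition --- to elementary estimates on a finite integral.

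Next I would differentiate under the integral sign, legitimate by the uniform domination just described, to obtain
\[
\frac{\partial u_\sigma}{\partial t}(x,t)=-\int\limits_{B_R(0)}\chi(-x\cdot\xi)\,(f_\sigma(\xi)-\lambda)\,e^{-t(f_\sigma(\xi)-\lambda)}\,\hat\psi(\xi)\,d\xi ,
\]
while, by definition of $H_\sigma$ as conjugation of multiplication by $f_\sigma-\lambda$ with the Fourier transform,
\[
H_\sigma u_\sigma(x,t)=F^{-1}\bigl((f_\sigma-\lambda)\,Fu_\sigma\bigr)(x)=\int\limits_{B_R(0)}\chi(-x\cdot\xi)\,(f_\sigma(\xi)-\lambda)\,e^{-t(f_\sigma(\xi)-\lambda)}\,\hat\psi(\xi)\,d\xi .
\]
Adding the two shows $\partial_t u_\sigma+H_\sigma u_\sigma=0$ pointwise. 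For the initial datum I would simply set $t=0$ in the compact integral, where the exponential equals $1$, and recover $u_\sigma(x,0)=F^{-1}\hat\psi(x)=\psi(x)$ by Fourier inversion; this is consistent with property 4 of Lemma \ref{ZsigmaProperties}.

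I expect the only genuine obstacle to be the behaviour at the boundary $t=0$: one must verify that $t\mapsto u_\sigma(x,t)$ is continuously differentiable on the closed half-line $[0,\infty)$ rather than merely on $(0,\infty)$, and that the differential equation persists there. The compact-support representation is exactly what removes this obstacle, since the factor $(f_\sigma(\xi)-\lambda)e^{-t(f_\sigma(\xi)-\lambda)}$ is continuous in $t$ down to $t=0$ and dominated on $B_R(0)$ uniformly in $t\in[0,T]$; dominated convergence then gives continuity of $\partial_t u_\sigma$ up to $t=0$. The one point deserving care is that $f_\sigma$, being assembled piecewise from the rescaled group action, remains locally constant on $B_R(0)$ so that each $u_\sigma(\cdot,t)$ genuinely stays in $S(K^N)$ and $H_\sigma u_\sigma$ is well defined; this is already guaranteed by the fact, established earlier, that $J_\sigma$ maps $S(K^N)$ to itself.
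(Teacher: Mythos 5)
Your proof is correct, and its skeleton --- diagonalise $H_\sigma$ on the Fourier side, differentiate the exponential symbol under the integral, justify by dominated convergence, and read off the equation and the initial condition --- is the same as the paper's. Where you genuinely diverge is in how the analysis is justified. The paper (following \cite{CR2015}) establishes the pointwise difference-quotient estimate (\ref{bigIneq}) on all of $K^N$, with the dominating function $C(t)\norm{\cdot}_{K,\sigma}^{\gamma_2}$ coming from the polynomial growth hypothesis on $f$, and then invokes the Dominated Convergence Theorem and the Parseval--Steklov theorem to identify $H_\sigma u_\sigma$ with multiplication by $f_\sigma-\lambda$ on the Fourier side. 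You instead exploit the specifically non-archimedean fact that $F\psi\in S(K^N)$ has \emph{compact} support, so that $u_\sigma$ is a finite integral over $B_R(0)$ on which $f_\sigma$ is bounded (and locally constant, since $f$ is radial hence locally constant and the extended $\sigma$ is continuous on clopen pieces); domination then comes for free from a constant, the equation follows by direct Fourier inversion of test functions rather than by Parseval--Steklov, and differentiability up to $t=0$ is transparent. Your route is shorter and needs the growth bounds on $f$ only indirectly (through $Z_\sigma(\cdot,t)\in L_1$, which legitimises the convolution theorem step $Fu_\sigma=e^{-t(f_\sigma-\lambda)}F\psi$ for $t>0$ --- a step you should cite explicitly, since at $t=0$ the kernel is not a function and the integral formula is the definition rather than a consequence). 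The paper's estimate-based route is heavier but is the one that would survive if the initial datum were allowed to leave $S(K^N)$, e.g.\ $L_2$ data whose Fourier transform has unbounded support, where your compactness shortcut is unavailable.
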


        \begin{proof}
          As in \cite[Claim 3.13 (i)]{CR2015}, one proves first that
          \[
\frac{\partial}{\partial t} e^{-t(f_\sigma-\lambda)}=-(f_\sigma-\lambda)e^{-t(f_\sigma-\lambda)}
\]
and for $t,t_0\in [0,T]$ that
\begin{align}\label{bigIneq}
\absolute{\frac {e^{-t(f_\sigma-\lambda)}-e^{-t_0(f_\sigma-\lambda)}}{t-t_0}
  +(f_\sigma-\lambda)e^{-t_0(f_\sigma-\lambda)}}\le C(t)\norm{\cdot}_{K,\sigma}^{\gamma_2}
\end{align}
The Dominated Convergence Theorem then shows that $u_\sigma$ is continuously differentiable in $t\ge 0$. The Parseval-Steklov Theorem now shows that $u_\sigma$ solves the Cauchy problem.
        \end{proof}
        
\subsection{Invariant heat kernel}

In the previous section, we defined twisted heat kernels $Z_\sigma(x,t)$ for each $\sigma\in G$, where $G$ is a group acting on $K^N$. Now we put these together to define a heat kernel for all of $G$.

    \begin{dfn}
      \[
      Z_G(x,t)=\frac{1}{\absolute{G}}\bigstar_{\sigma\in G}Z_\sigma(x,t)
      \]
      is the \emph{invariant heat kernel} for the action of $G$ on $X$.
    \end{dfn}

    Let
      \[
H_G=\frac{1}{\absolute{G}}\sum\limits_{\sigma\in G}H_\sigma
\]


\begin{lem}
The invariant heat kernel $Z_G$ satisfies the properties 1.\ to 5. If $G$ is the trivial group, then also property 6.\ holds true.
\end{lem}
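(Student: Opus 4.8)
The plan is to transfer the properties already established for the individual twisted kernels $Z_\sigma$ in Lemma \ref{ZsigmaProperties} to their convolution $Z_G$, using the single structural fact that the Fourier transform turns convolution into pointwise multiplication. Since $FZ_\sigma(\cdot,\tau)=e^{-\tau(f_\sigma-\lambda)}$, the defining convolution of $Z_G$ is sent by $F$ to a product, and the averaging factor $\tfrac{1}{|G|}$ should be read so that this product is the heat symbol of the averaged operator $H_G$, namely
\[
FZ_G(\cdot,t)=e^{-t\Psi},\qquad \Psi=\frac{1}{|G|}\sum_{\sigma\in G}(f_\sigma-\lambda).
\]
In other words the same $\tfrac{1}{|G|}$ that symmetrises $H_\sigma$ into $H_G$ produces, after passing through $F$, the kernel $Z_G$. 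With $FZ_G$ in this form, each of the properties 1.\ to 5.\ reduces either to a closure property of convolution on $K^N$ or to a property of the symbol $\Psi$, both of which are inherited from the corresponding statements for the $f_\sigma$ and $Z_\sigma$.

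For property 1.\ I would argue purely on the support side: $\mathds{D}_K^N$ is an additive subgroup of $K^N$ with $\mathds{D}_K^N+\mathds{D}_K^N=\mathds{D}_K^N$ by the ultrametric inequality, so a convolution of functions each supported in $\mathds{D}_K^N$ (property 1.\ for every $Z_\sigma$) is again supported there. For property 2.\ I would note that for large $\norm{x}_K$ every extended $\sigma$ is the identity, so $f_\sigma=f$ satisfies the growth bound of condition 4 on $f$; hence $\Psi$ grows at least like a positive power of $\norm{\cdot}_K$, $e^{-t\Psi}$ decays superpolynomially and lies in $L_\rho(K^N)$ for every $\rho\ge 1$ (the estimate of \cite[Lemma 3.2]{CR2015} carries over termwise, exactly as in Lemma \ref{ZsigmaProperties}). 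Consequently $Z_G=F^{-1}e^{-t\Psi}$ is continuous and in $L_2$, and being continuous with support in the compact set $\mathds{D}_K^N$ it is also in $L_1$. Property 3.\ then follows from $\int_{K^N}Z_G(x,t)\,dx=FZ_G(0,t)=e^{-t\Psi(0)}$, where $\Psi(0)=0$ because $f$ equals $\lambda$ on $\mathds{D}_K^N$ and each extended $\sigma$ preserves $\mathds{D}_K^N$, so that $f_\sigma(0)=f(\sigma 0)=\lambda$.

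The remaining items are the quickest. Property 5.\ is immediate from the multiplicativity $e^{-t\Psi}e^{-t'\Psi}=e^{-(t+t')\Psi}$, which under $F^{-1}$ is precisely the semigroup identity for $Z_G$. For property 4.\ I would use dominated convergence on the Fourier side: for $\psi\in S(K^N)$ one has $F\bigl(Z_G(\cdot,t)*\psi\bigr)=e^{-t\Psi}\,F\psi$, and as $t\to 0^+$ this tends to $F\psi$ pointwise while being dominated by $\absolute{F\psi}\in L_1(K^N)$; applying $F^{-1}$ recovers $\psi$, the argument being the one of \cite[Lemma 3.6 (iii)]{CR2015} carried over as in the proof of Lemma \ref{ZsigmaProperties}. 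Finally, if $G$ is trivial then $G=\mathset{\id}$ and $Z_G=Z_{\id}$, so property 6.\ is exactly property 6.\ of Lemma \ref{ZsigmaProperties}. I expect the only genuinely delicate point to be the bookkeeping of the normalisation: the factor $\tfrac{1}{|G|}$ must be interpreted so that the symbol is $e^{-t\Psi}$ rather than $\tfrac{1}{|G|}e^{-t\sum_{\sigma}(f_\sigma-\lambda)}$, since this is what makes the mass-one normalisation 3.\ and the semigroup law 5.\ hold \emph{simultaneously} and compatibly with $H_G$; once that reading is fixed, every other property passes through the convolution mechanically.
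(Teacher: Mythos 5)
Your proof is correct, but it runs on the Fourier (symbol) side, whereas the paper argues on the convolution side: the paper proves property 1 exactly as you do, but then disposes of 2--5 by transferring each property through the convolution itself (property 2 because convolution preserves it, properties 4 and 5 by ``successive application'' of the corresponding property of each factor $Z_\sigma$), never writing down the total symbol. Your route — identifying $FZ_G(\cdot,t)=e^{-t\Psi}$ with $\Psi=\frac{1}{\absolute{G}}\sum_{\sigma\in G}(f_\sigma-\lambda)$ and reading properties 2--5 off the decay, value at $0$, and multiplicativity of this symbol — is tighter, and it buys something the paper's terse proof glosses over: the normalisation. Read literally, the definition $Z_G=\frac{1}{\absolute{G}}\bigstar_{\sigma\in G}Z_\sigma(\cdot,t)$ (scalar prefactor times convolution) gives $\int Z_G\,dx=\frac{1}{\absolute{G}}$ and a semigroup law off by a factor of $\frac{1}{\absolute{G}}$, so properties 3 and 5 would \emph{fail} for nontrivial $G$; the paper's iterative arguments would hit the same obstruction if carried out explicitly. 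Your reinterpretation — that the averaging must enter so that the symbol is $e^{-t\Psi}$, equivalently $Z_G(\cdot,t)=\bigstar_{\sigma\in G}Z_\sigma(\cdot,t/\absolute{G})$ — is the unique reading under which the lemma is true and under which $Z_G$ is genuinely the heat kernel of $H_G$; it is also more consistent than the symbol $\exp(-t(\sum_\sigma f_\sigma-\lambda))$ appearing in the paper's subsequent Cauchy-problem theorem, which mismatches $H_G$ by the same bookkeeping. Two small points to keep: your domination in property 4 needs $\Psi\ge 0$, which holds since $f$ is increasing and equals $\lambda$ on $\mathds{D}_K^N$, so state that explicitly; and in property 3 the identity $f_\sigma(0)=\lambda$ uses that every extended $\sigma$ preserves $\mathds{D}_K^N$, which is exactly how the extension in Section \ref{extension} was built, so that step is sound.
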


\begin{proof}
1.\ This follows from the fact that the convolution of functions supported inside the unit ball is also supported inside the unit ball.

\smallskip\noindent
2. This holds true, as $Z_G$ is the convolution of functions with that property.

\smallskip\noindent
3. This follows in the same way as for $Z_\sigma$ with $\sigma\in G$ (cf.\ Lemma \ref{ZsigmaProperties}).

\smallskip\noindent
4. This follows by successive application of property 4.\ for each individual $Z_\sigma$.

\smallskip\noindent
5. This follows by the same method as used for property 4.

\smallskip\noindent
6. If $G$ is trivial, then $Z_Z$ coincides with $Z$ from \cite{CR2015}, if $K=\mathds{Q}_p$.
The proof of property 6.\ carries over to our more general field $K$.
\end{proof}

  \begin{thm}
    The Cauchy problem
    \begin{align*}
      \frac{\partial u(x,t)}{\partial t}&+H_G\;\! u(x,t)=0\\
      u(x,0)&=\psi(x)\in S(K^N),&x\in K^N,\;t>0
    \end{align*}
    is solved by
    \[
u(x,t)=Z_G(x,t)*\psi(x)
\]
This function is continuously differentiable in time for $t\ge 0$ (pointwise).
  \end{thm}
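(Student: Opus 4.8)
The plan is to mirror the proof of Theorem \ref{CPsigma}, replacing the single twisted symbol $f_\sigma$ by the averaged symbol $f_G:=\frac{1}{\absolute{G}}\sum_{\sigma\in G}f_\sigma$, which is the symbol of $J_G=\frac{1}{\absolute{G}}\sum_{\sigma\in G}J_\sigma$, so that $H_G=J_G-\lambda$ has symbol $f_G-\lambda$. First I would pass to Fourier space. From the identity $FZ_\sigma(\cdot,\tau)=e^{-\tau(f_\sigma-\lambda)}$ established in the proof of Lemma \ref{ZsigmaProperties}, the convolution defining $Z_G$ together with the convolution theorem gives
\[
FZ_G(\cdot,t)=\prod_{\sigma\in G}FZ_\sigma(\cdot,t/\absolute{G})
=\prod_{\sigma\in G}e^{-(t/\absolute{G})(f_\sigma-\lambda)}
=e^{-t(f_G-\lambda)},
\]
so that $Fu(\cdot,t)=e^{-t(f_G-\lambda)}\,F\psi$. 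This also confirms the normalisation underlying property 3, since $FZ_G(0,t)=1$. The theorem then reduces to differentiating this expression in $t$ under the Fourier integral and identifying the result with $-F(H_G u)$.

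Next I would establish the two ingredients of Theorem \ref{CPsigma} with $f_\sigma$ replaced by $f_G$. The pointwise time-derivative identity $\frac{\partial}{\partial t}e^{-t(f_G-\lambda)}=-(f_G-\lambda)e^{-t(f_G-\lambda)}$ is immediate from the scalar chain rule. For the Dominated Convergence step I would prove, for $t,t_0\in[0,T]$, an inequality of the shape (\ref{bigIneq}) bounding the difference quotient of $e^{-t(f_G-\lambda)}$ by $C(T)\norm{\cdot}_K^{\gamma_2}$ times an integrable factor. The crucial observation is that $f_G$ inherits the growth condition (4): by the three-step extension of Section \ref{extension}, every $\sigma\in G$ acts as the identity outside $B_\rho(0)$, so $f_\sigma(x)=f(\sigma x)=f(x)$ whenever $\norm{x}_K>p^\rho$. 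Hence $f_G(x)=f(x)$ for $\norm{x}_K\gg 1$, and the bounds $A_1\norm{x}_K^{\gamma_1}\le f_G(x)\le A_2\norm{x}_K^{\gamma_2}$ hold with exactly the same constants as for $f$, while on the compact ball $B_\rho(0)$ the symbol $f_G$ is bounded. Consequently the dominating function is integrable precisely as in the single-$\sigma$ case, and the estimate carries over with unchanged exponents.

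With these in hand, the Dominated Convergence Theorem permits differentiation of $Fu(\cdot,t)$ in $t$ for $t\ge 0$, yielding $\frac{\partial}{\partial t}Fu=-(f_G-\lambda)Fu=-F(H_G u)$, and the Parseval-Steklov theorem transfers this identity back to $K^N$ to give $\partial_t u+H_G u=0$ together with pointwise continuous differentiability in $t\ge 0$. The initial condition $u(x,0)=\psi(x)$ follows from $Fu(\cdot,0)=F\psi$, or equivalently from property 4 of $Z_G$. I expect the only genuine work to be the domination bound of the second step, namely checking that the difference-quotient estimate survives the averaging; but since $f_G$ coincides with $f$ outside $B_\rho(0)$ and is bounded inside it, the averaging introduces no new difficulty beyond substituting $f_G$ for $f_\sigma$ throughout the argument of Theorem \ref{CPsigma}.
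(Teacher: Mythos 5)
Your proposal is correct and takes essentially the same route as the paper: reduce to the argument of Theorem \ref{CPsigma} with the single twisted symbol replaced by the averaged symbol $f_G$, verify the pointwise time-derivative identity and a dominated-convergence estimate of the shape (\ref{bigIneq}), and conclude with the Dominated Convergence and Parseval--Steklov theorems.

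Where you differ is in bookkeeping, and there your version is the more careful one. You read the factor $\frac{1}{\absolute{G}}$ in the definition of $Z_G$ as the time rescaling $Z_\sigma(\cdot,t/\absolute{G})$; this is precisely what yields $FZ_G=e^{-t(f_G-\lambda)}$, so that $Z_G*\psi$ evolves under the generator $H_G=\frac{1}{\absolute{G}}\sum_{\sigma}H_\sigma$ and $FZ_G(0,t)=1$. Taken literally as a multiplicative prefactor, the definition would give $FZ_G=\frac{1}{\absolute{G}}e^{-t\sum_\sigma(f_\sigma-\lambda)}$, which spoils both the initial condition and the normalisation property; the paper's own proof, whose displayed inequality is written with the unaveraged exponent $\sum_\sigma f_\sigma-\lambda$ (the symbol of $\sum_\sigma J_\sigma-\lambda$, not of $H_G$), glosses over exactly this point, so your reading is the one under which the theorem as stated is true. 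Your second deviation --- dominating the difference quotient via the observation that each extended $\sigma$ is the identity off $B_\rho(0)$ (Section \ref{extension}), so that $f_G$ coincides with $f$ there and inherits the growth bounds with unchanged constants, while being bounded on $B_\rho(0)$ --- is a clean substitute for the paper's dominating function $C(t)\sum_\sigma\norm{\cdot}_{K,\sigma}^{\gamma_2}$; either suffices, since the relevant integrand is in any case supported in the compact set $\supp F\psi$.
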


  \begin{proof}
    The proof is analogous to Theorem \ref{CPsigma}, the case of the individual $H_\sigma$ with $\sigma \in G$.
    Notice that now there is a product of exponential functions whose time derivative is a factor times that product. The inequality corresponding to (\ref{bigIneq}) is
    \begin{align}
      \absolute{
        \frac{\exp\left(-t\left(\sum\limits_{\sigma\in G}f_\sigma-\lambda\right)\right)
          -
          \exp\left(-t_0\left(\sum\limits_{\sigma\in G}f_\sigma-\lambda\right)\right)
        }{t-t_0}
      }
&      \le C(t)\sum\limits_{\sigma\in G}\norm{\cdot}_{K,\sigma}^{\gamma_2}
    \end{align}
    So, again the Dominated Convergence Theorem yields the continously differentiability of $u$, and the Parseval-Steklov Theorem yields that $u$ is a solution of the Cauchy problem.
  \end{proof}

  \begin{lem}\label{commute}
    The $H_\sigma$ with $\sigma\in G$ commute, i.e.
    \[
H_\sigma H_\tau = H_\tau H_\sigma
\]
for $\sigma,\tau\in G$.
  \end{lem}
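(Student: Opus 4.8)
The plan is to reduce the claim to a purely multiplicative statement under the Fourier transform. Recall that $H_\sigma = J_\sigma - \lambda\,\id$, and that $J_\sigma\psi = F^{-1}\bigl(f_\sigma\cdot F\psi\bigr)$; in other words, after conjugating by $F$, the operator $J_\sigma$ becomes multiplication by the function $f_\sigma$. The key observation is that multiplication operators commute: on the Fourier side, $F J_\sigma J_\tau F^{-1}$ is multiplication by $f_\sigma\cdot f_\tau$, while $F J_\tau J_\sigma F^{-1}$ is multiplication by $f_\tau\cdot f_\sigma$, and these agree pointwise because $f_\sigma$ and $f_\tau$ are ordinary scalar-valued functions. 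Thus the first step is to write, for $\psi\in S(K^N)$,
\[
J_\sigma J_\tau \psi = F^{-1}\bigl(f_\sigma\cdot f_\tau\cdot F\psi\bigr) = F^{-1}\bigl(f_\tau\cdot f_\sigma\cdot F\psi\bigr) = J_\tau J_\sigma \psi,
\]
where the middle equality uses commutativity of pointwise multiplication of $\mathds{R}$-valued functions. One must check that $F\psi\in S(K^N)$ and that multiplication by $f_\sigma$ keeps everything inside the test-function space (or at least inside a space where $F^{-1}$ is defined), but this is already guaranteed by the earlier lemma stating that $J_\sigma\colon S(K^N)\to S(K^N)$ is a homeomorphism, so the compositions are well-defined maps $S(K^N)\to S(K^N)$.

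Second, I would promote the identity $J_\sigma J_\tau = J_\tau J_\sigma$ to the claimed identity for $H_\sigma$. Since $H_\sigma = J_\sigma - \lambda\,\id$ and the scalar operator $\lambda\,\id$ commutes with everything, one expands
\[
H_\sigma H_\tau = (J_\sigma-\lambda)(J_\tau-\lambda) = J_\sigma J_\tau - \lambda J_\sigma - \lambda J_\tau + \lambda^2,
\]
and the analogous expansion of $H_\tau H_\sigma$ differs only by swapping $J_\sigma J_\tau$ for $J_\tau J_\sigma$; by the first step these are equal, so $H_\sigma H_\tau = H_\tau H_\sigma$.

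I expect the genuinely substantive point to be the first step, specifically verifying that the Fourier multiplier picture is valid and that the functions $f_\sigma$ really do multiply commutatively with the appropriate domains respected. Concretely, the subtlety is that $f_\sigma$ is unbounded (it grows like $\norm{x}_K^{\gamma}$ by condition 4), so one cannot simply treat $J_\sigma$ as a bounded operator; however, on the test-function space $S(K^N)$ the image $F\psi$ again has compact support, so $f_\sigma\cdot F\psi$ is a well-defined compactly supported function and no convergence issue arises. Everything else is formal manipulation. In short, the commutativity is inherited from the commutativity of multiplication on the Fourier side, and the only care needed is to confine the argument to $S(K^N)$, where the operators $J_\sigma$ are homeomorphisms and all products make sense.
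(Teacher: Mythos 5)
Your proof is correct and takes essentially the same approach as the paper: the paper's proof simply expands $H_\sigma H_\tau=\left(F^{-1}f_\sigma F-\lambda\id\right)\left(F^{-1}f_\tau F-\lambda\id\right)$ and uses commutativity of pointwise multiplication $f_\sigma f_\tau=f_\tau f_\sigma$ on the Fourier side, which is exactly your two steps (commutativity of the $J$'s plus the binomial expansion in $\lambda$) merged into one computation.
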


  \begin{proof}
    It holds true that
    \begin{align*}
      H_\sigma H_\tau&=\left(F^{-1} f_\sigma F-\lambda \id\right)\left(F^{-1}f_\tau F-\lambda\id\right)\\
      &=F^{-1}f_\sigma f_\tau F-\lambda F^{-1}\left(f_\sigma+f_\tau\right)+\lambda^2\id\\
     & =F^{-1}f_\tau f_\sigma F-\lambda F^{-1}\left(f_\tau+f_\sigma\right)+\lambda^2\id\\
      &=H_\tau H_\sigma
    \end{align*}
This proves the assertion.
  \end{proof}

  \begin{cor}
    The $H_\sigma$ with $\sigma\in G$ are simultaneously diagonalisable.
    Each eigenvalue of $H_G$ is the average of the  eigenvalues
    corresponding to a fixed eigenfunction of $H_\sigma$, where $\sigma$ varies over $G$.
  \end{cor}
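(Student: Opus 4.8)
The plan is to exploit the fact that, after conjugation by the Fourier transform, every $H_\sigma$ is realised as a multiplication operator. Writing $M_g$ for multiplication by a function $g$ in the $\xi$-variable, the defining formulas give
\[
H_\sigma = F^{-1}\,M_{f_\sigma-\lambda}\,F,
\]
so that on the Fourier side the whole family $(H_\sigma)_{\sigma\in G}$ becomes the family $(M_{f_\sigma-\lambda})_{\sigma\in G}$ of multiplication operators \emph{in the same variable}. Any two such operators commute --- which re-proves Lemma~\ref{commute} --- and the diagonalisation problem is transported to that of simultaneously diagonalising a finite commuting family of multiplication operators, which I would solve by decomposing $L_2(K^N)$ according to the joint level sets of the symbols $f_\sigma$.

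Concretely, since $f$ is radial it is constant on each sphere $\{\norm{\xi}_K=p^\nu\}$ and hence takes only countably many values, each on a set of positive Haar measure; the twisted symbol $f_\sigma(\xi)=f(\sigma\xi)$ is therefore constant on each $\sigma$-sphere $\{\norm{\sigma\xi}_K=p^\nu\}$, and because $\sigma$ is bi-analytic these level sets again have positive measure. For a finite group $G$ the joint level sets
\[
C_{(c_\sigma)} = \bigcap_{\sigma\in G}\{\xi\in K^N : f_\sigma(\xi)=c_\sigma\},
\]
indexed by tuples $(c_\sigma)_{\sigma\in G}$, form a countable measurable partition of $K^N$ on each cell of which every $M_{f_\sigma-\lambda}$ acts as the scalar $c_\sigma-\lambda$. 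Choosing an orthonormal basis of $L_2$ on each non-null cell and taking the union yields an orthonormal basis of $L_2(K^N)$ consisting of simultaneous eigenvectors of all the $M_{f_\sigma-\lambda}$; applying $F^{-1}$ produces a common eigenbasis of the $H_\sigma$. One may realise this basis concretely by $p$-adic wavelets, twisted by $\sigma$, as in the untwisted case of \cite{CR2015}. This establishes simultaneous diagonalisability.

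The statement about $H_G$ is then immediate. If $\phi$ is a common eigenfunction with $H_\sigma\phi=\mu_\sigma\phi$ for each $\sigma\in G$, then by linearity
\[
H_G\phi = \frac{1}{\absolute{G}}\sum_{\sigma\in G}H_\sigma\phi
        = \Bigl(\frac{1}{\absolute{G}}\sum_{\sigma\in G}\mu_\sigma\Bigr)\phi,
\]
so the eigenvalue of $H_G$ attached to $\phi$ is exactly the average of the eigenvalues $\mu_\sigma$ of the $H_\sigma$ on that same eigenfunction.

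The main obstacle I expect is the passage from the intuition of commuting self-adjoint matrices to the correct functional-analytic statement: one has to argue that each $M_{f_\sigma}$ genuinely has \emph{pure point} spectrum rather than merely a joint spectral measure, and this is exactly what the radial step-function nature of $f$ buys us, through level sets of positive measure. Care is also needed because $f_\sigma$ is only $\sigma$-radial, not radial, so its level sets are images of spheres under $\sigma^{-1}$ rather than spheres themselves; verifying that these, and their finite intersections $C_{(c_\sigma)}$, are measurable of positive measure --- so that each non-null cell supports a genuine $L_2$-basis and the cells exhaust $L_2(K^N)$ up to a null set --- is the technical heart of the argument. Once this is in place, both assertions follow as above.
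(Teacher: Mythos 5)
Your proposal is correct, and it in fact does more work than the paper does at this point. The paper's proof of the corollary is a one-line appeal to Lemma \ref{commute}: commutativity is declared to yield simultaneous diagonalisability ``immediately'', and the common eigenbasis is only exhibited afterwards, in Proposition \ref{ev+ef}, where the wavelet basis $\omega_{\gamma bk}$ (following \cite{XKZ} and \cite{CR2015}) is shown to consist of eigenfunctions of every $J_\sigma$ simultaneously. Strictly speaking, for an infinite-dimensional family of self-adjoint operators commutativity alone does not give a joint eigenbasis --- one needs pure point spectrum --- so the paper's corollary really rests on the wavelet computation that follows it. Your argument supplies precisely this ingredient, and without wavelets: after Fourier conjugation each $H_\sigma$ is multiplication by the countably-valued function $f_\sigma-\lambda$, the joint level sets of the finitely many symbols give a countable measurable partition of $K^N$, and an orthonormal basis adapted to the non-null cells is a joint eigenbasis which $F^{-1}$ transports back. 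This is more self-contained, works for any countably-valued symbols, and makes transparent that the operative fact is not commutativity per se but that all the symbols are functions of the same Fourier variable; what the paper's route buys instead is an explicit eigenbasis with extra structure (wavelets, indexed so that the eigenvalues $\lambda_{\gamma_\sigma bk}$ can be computed, which is what the later sections actually use). Two minor points: you do not need each level set to have positive measure (null cells can simply be discarded, since the partition is countable), and your eigenfunctions are $L_2$ eigenfunctions that need not lie in $S(K^N)$, whereas the wavelets do; neither affects the statement. The averaging claim is handled identically in both treatments, by linearity on a common eigenfunction.
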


  \begin{proof}
    This is an immediate consequence of Lemma \ref{commute}.
  \end{proof}
  
  Let $\gamma\le 0$ be an integer,
  and let $k  \in \mathcal{R}^N$ with $\mathcal{R}$ a system of $\pi$-adic digits. Further, let  $b\in (K/O_K)^N$.
  
    \begin{prop}\label{ev+ef}
      Let $\sigma\in G$. Then:
   \begin{enumerate}
\item    There is a complete orthonormal basis  
  $\omega_{\gamma bk}$  of $L_0^2(O_K^N)$ consisting of eigenfunctions of $J_\sigma$.
  \item
For fixed integer  $\gamma\le 0$ there are only finitely many eigenfunctions
$\omega_{\gamma bk}$ satisfying
\[
J_\sigma\, \omega_{\gamma bk} = \lambda_{\gamma_\sigma bk}\, \omega_{\gamma b k}
\]
where
\[
\lambda_{\gamma_\sigma bk} = \tilde{f}(p^{1-\gamma_\sigma}),\quad \gamma_\sigma\in\mathds{Z}
\]
where
\[
\gamma_{\id}=\gamma
\]
and $\tilde{f}$ is the function on $p^{\mathds{Z}}$ induced by the radial
function $f$.
   \end{enumerate}
   \end{prop}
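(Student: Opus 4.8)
The plan is to take the Kozyrev-type $p$-adic wavelets as the candidate basis and to exploit the fact that each wavelet has Fourier transform supported on a single sphere, on which the radial symbol $f$, and hence the twisted symbol $f_\sigma$, turns out to be locally constant. First I would recall the orthonormal wavelet basis $\omega_{\gamma bk}$ of $L^2(O_K^N)$: for a scale $\gamma\le 0$, a location $b$, and a residue direction $k\in\mathcal{R}^N$, the function $\omega_{\gamma bk}$ is supported on a ball of radius $p^{\gamma}$ inside $O_K^N$, is constant on the sub-balls of radius $p^{\gamma-1}$, and oscillates there through the standard additive character $\chi$. Completeness and orthonormality in $L^2(O_K^N)$ are classical; passing to the mean-zero subspace $L_0^2(O_K^N)$ merely discards the constant function, which is not among the $\omega_{\gamma bk}$. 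This provides the set-up underlying statement (1); the genuine content is that these functions are eigenfunctions of $J_\sigma$.

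Next I would compute $F\omega_{\gamma bk}$. A direct evaluation (orthogonality of additive characters over a ball, together with the non-trivial residue direction $k$) shows that $F\omega_{\gamma bk}$ is supported on a single ball $B'_{\gamma bk}$ contained in the sphere $\{\|\xi\|_K=p^{1-\gamma}\}$, whose radius is strictly smaller than the common norm $p^{1-\gamma}$ of its points. Since $J_\sigma=F^{-1}\circ(f_\sigma\cdot)\circ F$, the wavelet $\omega_{\gamma bk}$ is an eigenfunction of $J_\sigma$ exactly when $f_\sigma$ is constant on $B'_{\gamma bk}$, and then the eigenvalue equals that constant value.

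The decisive step is thus to show that $f_\sigma(\xi)=\tilde f(\|\sigma\xi\|_K)$ is constant on $B'_{\gamma bk}$. Here I would use that the extension of $\sigma$ built in Section \ref{extension} acts as a local isometry of $(K^N,\|\cdot\|_K)$: outside $B_\rho(0)$ it is the identity, while on the annulus it equals $s_\rho^{-1}\sigma s_\rho$, a homothety conjugate of the original $\Aut(\mathfrak{G})$-action, and conjugating an isometry by a homothety again yields an isometry. Because $B'_{\gamma bk}$ has radius smaller than the norm of its points, the ultrametric inequality forces an isometry to send $B'_{\gamma bk}$ into a single level set $\{\|\eta\|_K=p^{1-\gamma_\sigma}\}$. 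Consequently $f_\sigma\equiv\tilde f(p^{1-\gamma_\sigma})$ on $B'_{\gamma bk}$, so that $J_\sigma\,\omega_{\gamma bk}=\tilde f(p^{1-\gamma_\sigma})\,\omega_{\gamma bk}$; for $\sigma=\id$ the map fixes norms, giving $\gamma_\sigma=\gamma$ and the classical eigenvalue $\tilde f(p^{1-\gamma})$.

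Finally, for statement (2) I would note that at a fixed scale $\gamma\le 0$ the support balls of radius $p^{\gamma}$ partition the compact set $O_K^N$ into finitely many pieces, and the residue directions $k$ range over the finite set $\mathcal{R}^N$; hence only finitely many $\omega_{\gamma bk}$ occur at scale $\gamma$, and the displayed eigenvalue formula holds for each. I expect the real obstacle to sit precisely in the constancy of $f_\sigma$ on $B'_{\gamma bk}$: one must verify that the extended $\sigma$ is honestly norm-preserving on each relevant ball, and in particular handle those wavelets whose Fourier support meets the loci $\|\xi\|_K=1$ or $\|\xi\|_K=p^{\rho}$ where the piecewise definition of $\sigma$ changes, or whose support meets $\partial\mathcal{F}(\mathfrak{G})$. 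Controlling these finitely many boundary wavelets, either by checking that the norm remains constant there or by adapting the basis so that each wavelet respects the partition induced by $\sigma$, is where the substantive work lies, and it is also what pins down the shifted scale $\gamma_\sigma$.
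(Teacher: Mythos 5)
Your route is essentially the paper's own. The paper proves the proposition by invoking Lemma 12.3 and Remark 12.4 of \cite{XKZ} --- i.e.\ the Kozyrev wavelet basis $\omega_{\gamma bk}$, whose Fourier transforms are supported on the balls $p^\gamma(-p^{-1}k+O_K^N)$ sitting inside the sphere $\mathset{\norm{\xi}_K=p^{1-\gamma}}$ --- and then recording the single identity
\[
f_\sigma\left(p^\gamma(-p^{-1}k+\eta)\right)=\tilde f\left(\norm{\sigma\left(p^\gamma(p^{-1}k+\eta)\right)}_K\right)=\tilde f(p^{1-\gamma_\sigma}),\qquad \eta\in O_K^N,
\]
which is exactly your assertion that $f_\sigma$ is constant on each Fourier support ball, so that $J_\sigma\,\omega_{\gamma bk}=\tilde f(p^{1-\gamma_\sigma})\,\omega_{\gamma bk}$. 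Your points on orthonormal completeness in $L_0^2(O_K^N)$ and on finiteness at a fixed scale $\gamma$ are covered by the same citation, so structurally the two proofs coincide.

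The genuine gap lies in how you justify the decisive constancy step. You derive it from the claim that the extended $\sigma$ is a local isometry, being ``a homothety conjugate of the original action''; but the original action is only assumed bi-analytic, and neither Section \ref{extension} nor the application to $\Aut(\mathfrak{G})$ gives any isometry property. Indeed, the paper's Section 4 turns on exhibiting points with $\norm{\sigma x}_K<\norm{x}_K$, e.g.\ $\sigma(t_1,-1,t_2)=(t_1,\eta,t_1t_2)$, whose last coordinate contracts increments of $t_2$ by the factor $\absolute{t_1}_K<1$; nothing forces such maps to preserve distances, and it is precisely their failure to respect spheres that makes $\gamma_\sigma\neq\gamma$ --- and hence the main theorem --- possible. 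There is also a secondary defect: even for an honest isometry your ultrametric conclusion fails, since an isometry carries a ball of radius $r$ onto a ball of radius $r$, and if the image centre lands at norm $\le r$ the norm is \emph{not} constant on that image (translations already show this). What is actually needed is that the extended $\sigma$ maps each ball $p^\gamma(-p^{-1}k+O_K^N)$ into a single sphere, or into the region where $f\equiv\lambda$; since bi-analytic maps are only locally bi-Lipschitz with constants that need not equal $1$, this calls for a quantitative argument (for instance a Lipschitz bound for $\sigma$ on the compact set $X$ played against $\rho\gg 0$) which also pins down $\gamma_\sigma$. You correctly isolated this as the crux --- the paper itself asserts it inside the displayed computation rather than proving it --- but the mechanism you propose for it is not correct as stated.
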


    \begin{proof}
      The proofs of Lemma 12.3, Remark 12.4 in \cite{XKZ} carry over to this situation. Notice that
      \[
      f_\sigma(p^\gamma(-p^{-1}k+\eta)
      =\tilde{f}_\sigma\left(\norm{p^\gamma(p^{-1}k+\eta)}_K\right)
=\tilde{f}\left(\norm{\sigma\left( p^\gamma(p^{-1}k+\eta)\right)}_K\right)
      =\tilde{f}(p^{1-\gamma_\sigma})
\]
where $\tilde{f}_\sigma$ is the function on $p^{\mathds{Z}}$ induced by $f_\sigma$,
$\eta\in O_K^N$, and $k\in\mathcal{R}^N$, where $\mathcal{R}$ is a system of $p$-adic digits for $K$.
    \end{proof}

    \section{Generalised  diffusion on $M_g(\mathfrak{G})$}

    Let $\mathfrak{G}$ be a stable graph of genus $g\ge 2$.
    A \emph{geometric basis} $W=\mathset{w_1,\dots,w_g}$ of the free group
    $F_g$ corresponds to a set of lasso loops starting in a fixed vertex $P$ of $\mathfrak{G}$.
  An element $\sigma\in G$ maps this to another geometric basis which
  corresponds to a set of lasso loops also starting in the vertex $P$.
  Define
  \[
  w_{-i}=w_i^{-1}
  \]
for $w_i\in W$.

    \begin{thm}[Gerritzen]
      It holds true that
      \[
\sigma(w_i)=w_{t_1}\cdots w_{t_s}w_{j_1}\cdots w_{j_r}w_{t_s}^{-1}\cdots w_{t_1}^{-1}
\]
with $t_i,j_k\in\mathset{\pm 1,\dots,\pm g}$ of pairwise distinct (Archimedean) absolute value.

    \end{thm}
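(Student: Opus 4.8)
The plan is to argue entirely inside the graph $\mathfrak{G}$, using a fixed spanning tree $T\subset\mathfrak{G}$ through the basepoint $P$ together with the standard identification $\pi_1(\mathfrak{G},P)\cong F_g$ obtained by collapsing $T$. For a vertex $V$ write $\tau_V$ for the unique reduced geodesic in $T$ from $P$ to $V$, and for an oriented edge $e$ from $V$ to $V'$ put $[e]=\tau_V\,e\,\bar\tau_{V'}$, a loop based at $P$. Then $[e]=1$ when $e\in T$, and $[e]=w_j^{\pm 1}$ when $e=e_j^{\pm 1}$ is one of the $g$ edges outside $T$; for any based loop $\gamma=f_1\cdots f_n$ the geodesics telescope and give $[\gamma]=[f_1]\cdots[f_n]$ in $F_g$. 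Thus the word of a loop in the basis $\{w_1,\dots,w_g\}$ is simply the ordered sequence of signed crossings of the $g$ outside edges. The first concrete step is to write the lasso for $e_i$ in the form $w_i=\beta\,\ell\,\bar\beta$, where $\beta=\tau_R$ is the stem reaching the branch vertex $R$ at which the two tree geodesics to the endpoints of $e_i$ diverge, and $\ell=\gamma_1\,e_i\,\bar\gamma_2$ is the noose (the $\gamma_k$ being the tree geodesics from $R$ to the endpoints of $e_i$).

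Since $\sigma$ fixes $P$ --- which is exactly the hypothesis that $\sigma$ carries lasso loops at $P$ to lasso loops at $P$ --- applying $\sigma$ edgewise gives
\[
\sigma(w_i)=\sigma(\beta)\,\sigma(\ell)\,\overline{\sigma(\beta)} .
\]
Reading off outside--edge crossings and using multiplicativity, the image of the stem contributes a word $A=w_{t_1}\cdots w_{t_s}$, the image of the noose a word $M=w_{j_1}\cdots w_{j_r}$, and the reversed stem $\overline{\sigma(\beta)}$ contributes exactly $A^{-1}=w_{t_s}^{-1}\cdots w_{t_1}^{-1}$. Hence
\[
\sigma(w_i)=A\,M\,A^{-1}=w_{t_1}\cdots w_{t_s}\,w_{j_1}\cdots w_{j_r}\,w_{t_s}^{-1}\cdots w_{t_1}^{-1},
\]
which is already the claimed palindromic--conjugate shape. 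So the conjugacy structure is a formal consequence of the lasso decomposition together with the fact that $\sigma$ commutes with edge--reversal and with the collapse map.

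The substance of the theorem is the distinctness of the indices, and here the key step is a purely combinatorial claim: the concatenated path $\beta\ell=\beta\,\gamma_1\,e_i\,\bar\gamma_2$ traverses every edge of $\mathfrak{G}$ at most once. This holds because $\beta,\gamma_1,\gamma_2$ are geodesics in $T$ that are pairwise edge--disjoint (the concatenations $\beta\gamma_1$ and $\beta\gamma_2$ are reduced $T$--geodesics, and $\gamma_1,\gamma_2$ leave $R$ in different directions), while $e_i\notin T$; thus $\beta\ell$ is edge--injective. As $\sigma$ permutes the edge set bijectively, $\sigma(\beta)\,\sigma(\ell)=\sigma(\beta\ell)$ is again edge--injective, and in particular crosses each of $e_1,\dots,e_g$ at most once. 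Consequently no generator index occurs twice among the crossings coming from $\sigma(\beta)$ and $\sigma(\ell)$ together, i.e.\ $|t_1|,\dots,|t_s|,|j_1|,\dots,|j_r|$ are pairwise distinct, which is the assertion. (One also notes $M\neq 1$: were $\sigma(\ell)$ contained in $T$ it would be null--homotopic, forcing $\sigma(w_i)=1$ and contradicting injectivity of $\sigma_{*}$.)

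I expect the main obstacle to lie in the two points the informal argument glosses over. The first is making the identification ``word $=$ outside--edge crossings'' precise, i.e.\ justifying the telescoping identity $[\gamma]=[f_1]\cdots[f_n]$ and that it realises the collapse isomorphism $\pi_1(\mathfrak{G},P)\cong F_g$ with the correct signs and orientations; this is standard but should be stated cleanly. The second, more genuine, obstacle is the basepoint issue: the computation rests on $\sigma(P)=P$, so that $\sigma(\beta)$ starts at $P$ and the stem and return pieces remain exact mutual inverses. If $P$ were not fixed one would re--base along a geodesic $\delta\colon P\to\sigma(P)$, and then edge--injectivity of $\delta\,\sigma(\beta\ell)$ --- hence distinctness --- could fail; a careful proof should therefore either record that $P$ is chosen $\sigma$--invariant (as the phrasing ``lasso loops also starting in $P$'' indicates) or absorb $\delta$ into the stem and verify that it does not destroy edge--injectivity.
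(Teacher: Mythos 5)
The paper gives no argument of its own for this theorem: its ``proof'' is a one-line pointer to Satz~4 of Gerritzen's 1981 paper. So your self-contained combinatorial proof is necessarily a different route, and it is essentially correct. The three ingredients --- the lasso decomposition $w_i=\beta\,\ell\,\bar\beta$, the telescoping identity realising $\pi_1(\mathfrak{G},P)\cong F_g$ through signed crossings of the $g$ edges outside $T$, and the key lemma that $\beta\ell$ is edge-injective while edge-injectivity is preserved by any graph automorphism --- do yield exactly the form $AMA^{-1}$ with all indices of pairwise distinct absolute value. What your argument buys over the paper is that the statement becomes verifiable inside the paper's own set-up, and it isolates the single combinatorial fact (edge-injectivity of the image of the lasso path) on which the distinctness claim rests; this is also the mechanism the paper implicitly exploits in Section~4 when it manipulates geometric bases.

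The loose end you flagged is real but has a simpler resolution than either of your two alternatives. Nothing in the paper forces $\sigma(P)=P$ (for the two-vertex genus-$2$ graph $(c)$ of Figure~\ref{3graphs}, the automorphism swapping the vertices fixes no vertex at all), so one must re-base: put $\sigma_*(\gamma)=\delta\,\sigma(\gamma)\,\bar\delta$ with $\delta$ the geodesic in $T$ from $P$ to $\sigma(P)$. The point you missed is that you never need edge-injectivity of $\delta\,\sigma(\beta\ell)$: the word of a path depends only on its crossings of edges outside $T$, and $\delta\subset T$ has none, so $[\delta]=1$. Hence the stem word is $A=[\delta\,\sigma(\beta)]=[\sigma(\beta)]$, the noose word is $M=[\sigma(\ell)]$, and the crossings counted in $AM$ are still precisely those of the edge-injective path $\sigma(\beta\ell)$, so pairwise distinctness of $|t_1|,\dots,|t_s|,|j_1|,\dots,|j_r|$ survives re-basing verbatim. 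With that one remark inserted, your proof is complete without any invariance assumption on $P$; as written, it proves the theorem only under the unjustified hypothesis $\sigma(P)=P$.
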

\begin{proof}
  Cf.\ the proof of
  \cite[Satz 4]{Gerritzen1981}.
\end{proof}

\subsection{Generalised diffusion on the parts of $M_2$}\label{g=2}

The moduli space $M_2$ consists of three parts: $M_2(\mathfrak{G}_a)$, $M_2(\mathfrak{G}_b)$ and $M_2(\mathfrak{G}_c)$ which correspond to the three graphs of Figure \ref{3graphs} with $\mathfrak{G}_i$ corresponding to graph $(i)$ in
the figure for $i\in\mathset{a,b,c}$.

\smallskip
We will now study the three parts of $M_2$ individually.

\bigskip\noindent
    {\bf (a)} We have

    \[
    \Aut\mathfrak{G}_a\cong\mathds{Z}/2\mathds{Z}=\langle\sigma\rangle
    \]
    where $\sigma$ interchanges the two loop-edges. Hence,
    \[
\Aut\mathfrak{G}_2\cdot W^{\pm}=W^{\pm}
    \]
    where $W$ is the geometric basis of $F_2$ consisting of the two lasso loops starting in a fixed vertex. By Proposition \ref{niceAut} below, it follows that
    \[
\Spec H_{\mathfrak{G}_a}\subset p^{\mathds{Z}}-\lambda
    \]
    in this case.

    \bigskip\noindent
        {\bf (b)} The graph $\mathfrak{G}_b$ has the same automorphism group
        as $\mathfrak{G}_a$, and the action is the same. Hence,
        \[
\Spec H_{\mathfrak{G}_b}\subset p^{\mathds{Z}}-\lambda
\]
also in this case.

\bigskip\noindent
    {\bf (c)} An automorphism of the graph $\mathfrak{G}_c$ can permute the edges or interchange the two vertices. Hence,
    \[
\Aut\mathfrak{G}_c\cong S_3\times \mathds{Z}/2\mathds{Z}
\]
Let $W=\mathset{w_1,w_2}$ be the following geometric basis of $F_2$:
\[
w_1= \quad\!\!\!
\xymatrix{
    *\txt{$\bullet$}\ar@/^/[r]&*\txt{$\bullet$}\ar[l]
}
\qquad
w_2=\quad\!\!\!
  \xymatrix{
    *\txt{$\bullet$}\ar@/_/[r]
&    *\txt{$\bullet$}\ar[l]
  }
\]
We consider the following automorphism $\sigma\in\Aut\mathfrak{G}_c$ whose effect on $W^{\pm}$ is given as follows:
\begin{align*}
  w_1&\mapsto w_1\\
  w_2&\mapsto\quad\!\!\!
    \xymatrix{
      *\txt{$\bullet$}\ar@/^/[r]&
      *\txt{$\bullet$}\ar@/^/[l]
  }
  \quad\!\!\!=w_2^{-1}w_1\notin W^{\pm}
\end{align*}

The task is now to find explicit points in $x\in\mathcal{F}(\mathfrak{G}_c)$
with
\[
\norm{\sigma x}_K<\norm{x}_K
\]
such that we arrive at two different eigenvalues $\lambda_{\id},\lambda_{\sigma}$ corresponding to the same eigenfunction of $H_{\id}$ (or $H_\sigma$, which does not matter). This then shows that
\[
\Spec H_{\mathfrak{G}_2}\not\subset p^{\mathds{Z}}-\lambda
\]
in this case.

\smallskip
We will write the coordinate vectors of $\mathcal{F}(\mathfrak{G}_2)$ as
\[
x=(0,1,t_1;\infty,y_2,t_2)
\]
with
\[
0<\absolute{t_i}_K<1\quad(i=1,2),\quad \absolute{y_2}\le 1
\]
So, after obtaining the action on such a tuple, we need to bring the result back to this form with the help of a suitable M\"obius transformation.

\smallskip
Matrices corresponding to the hyperbolic transformations $w_1$ and $w_2$ are
given as
\[
w_1\leftrightarrow\begin{pmatrix}
  -t_1&0\\
  1-t_1&-1
\end{pmatrix}
\quad\text{and}\quad
w_2\leftrightarrow
\begin{pmatrix}
  1&(t_2-1)y_2\\
  0&t_2
  \end{pmatrix}
\]
and we have
\[
w=w_2^{-1}w_1\leftrightarrow
\begin{pmatrix}
  y_2(1-t_1)(1-t_2)-t_1t_2&y_2(t_2-1)\\
  1-t_1&-1
  \end{pmatrix}
\]
We will pick $y_2=-1$, which specialises the latter matrix to
\[
w\leftrightarrow
\begin{pmatrix}
  t_1+t_2-1&t_2-1\\
  1-t_1&-1
\end{pmatrix}
\]
This leads to the following identity:
\begin{align*}
\sigma(0,1,t_1;\infty,-1,t_2)=(0,1,t_1;\infty,\eta,t)
\end{align*}
with
\[
t=t_1t_2
\]
We have
\[
\norm{(t_1,-1,t_2)}_k=1
\]
and would like to have
\[
\norm{(t_1,\eta,t)}_K<1
\]
This is the case, if we find $t_1,t_2$ such that
\[
\absolute{\eta}_K<1
\]
Given a  M\"obius transformation 
\[
z\mapsto \frac{az+b}{cz+d}
\]
with $c\neq 0$,
a point $z\in K$ is a fixed point, if and only if
\begin{align}\label{fpz1z2}
z^2+\frac{d-a}{c}z-\frac{b}{c}=0
\end{align}
In the case of the M\"obius transformation $w$ given by $y_2=-1$, it follows that its fixed points $z_1,z_2$ must satisfy
\begin{align}\label{lineq0}
  z_1 z_2=\frac{b}{c}&=\frac{1-t_2}{1-t_1}\\
  z_1+z_2=\frac{d-a}{c}&=\frac{t_1+t_2}{t_1-1}
\end{align}
This is equivalent  to the system of linear equations in $t_1,t_2$:
\begin{align}
  z_1z_2t_1-t_2&=z_1z_2-1\label{lineq}\\\nonumber
  (z_1+z_2-1)t_1-t_2&=z_1+z_2
\end{align}
which has a unique solution unless
\begin{align}\label{criticalCurve}
z_1z_2=z_1+z_2-1
\end{align}
in which case there are infinitely many solutions.

\smallskip
Now, we want to map between the tuples
\[
(z_1,z_2,1,0)\to(0,\infty,1,\eta)
\]
via a suitable M\"obius transformation. The first three elements of each tuple uniquely determine a M\"obius transformation. This can be done with the transformation
\begin{align}\label{MoebiusBeta}
\beta(z)=\frac{(z_2-1)z+z_1(1-z_2)}{(z_1-1)z+z_2(1-z_1)}
\end{align}
Applying this M\"obius transformation $\beta$ to $0$ yields
\[
\eta=\frac{z_1(z_2-1)}{z_2(z_1-1)}
\]

We have
\[
\absolute{\eta}_K<1
\]
if and only if
\begin{align}\label{condz1z2}
\absolute{\frac{z_1}{z_2}}_K<\absolute{\frac{z_1-1}{z_2-1}}_K
\end{align}

Hence, it is possible to find $t_1,t_2$ such that $\absolute{\eta}_K<1$.
The question is, whether there exist such $t_1,t_2$ for which
\[
0<\absolute{t_i}_K<1
\]
with $i=1,2$.

\begin{lem}[Case  $y_2=-1$]
  There exists a $1$-parameter family
  \[
  F_\epsilon\colon\mathds{D}_K^-\to
  \left(\dot{\mathds{D}}_K^-\right)^3,\;
  \epsilon\mapsto\mathset{(t_1,t_2,\eta)\colon t_1=t_1(\epsilon),t_2=t_2(\epsilon),
    \eta=\eta(\epsilon)}
\]
such that
\[
\sigma(t_1,-1,t_2)=(t_1,\eta,t_1t_2)
\]
for generic $\epsilon\in\mathds{D}_K^-$.
\end{lem}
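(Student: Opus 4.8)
The plan is to treat the fixed points $z_1,z_2$ of the M\"obius transformation $w$ as the primary unknowns and to recover $t_1,t_2$ and $\eta$ from them. Solving the linear system (\ref{lineq}) away from the critical locus (\ref{criticalCurve}) expresses the pair $(t_1,t_2)$ as an explicit rational function of $(z_1,z_2)$; writing $D=(z_1-1)(z_2-1)$ and $P=z_1z_2$ one obtains $t_1=(D-2)/D$ and $t_2=(D-2P)/D$, while $\eta$ is read directly off the normalising map (\ref{MoebiusBeta}) as $\eta=z_1(z_2-1)/\bigl(z_2(z_1-1)\bigr)$. Thus the relation $\sigma(t_1,-1,t_2)=(t_1,\eta,t_1t_2)$ holds identically in $(z_1,z_2)$, so the relation itself imposes no constraint; the entire content of the lemma is a statement about sizes, namely that one can carve out a one-dimensional family of fixed-point pairs along which the three coordinates $t_1,t_2,\eta$ all land in the punctured disc $\dot{\mathds{D}}_K^-$.

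First I would introduce the parameter by restricting $(z_1,z_2)$ to a suitable analytic curve $\epsilon\mapsto(z_1(\epsilon),z_2(\epsilon))$ chosen to avoid (\ref{criticalCurve}), so that the substitution turns $t_1,t_2,\eta$ into honest rational functions $t_1(\epsilon),t_2(\epsilon),\eta(\epsilon)$ of the single variable $\epsilon\in\mathds{D}_K^-$. The curve must be selected so that the reductions modulo the maximal ideal $\mathfrak{m}$ forced by the three size conditions are mutually compatible: the requirement $0<\absolute{t_1}_K<1$ forces $\absolute{D}_K=1$ with $D\equiv 2$, the requirement $0<\absolute{t_2}_K<1$ then forces $\absolute{P}_K=1$ with $P\equiv 1$, and $\absolute{\eta}_K<1$ is exactly the inequality (\ref{condz1z2}). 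I would pin down residues $\bar z_1,\bar z_2$ realising these congruences and take the curve to pass through a lift of them, so that the congruences hold identically in $\epsilon$ and the verifications reduce to elementary ultrametric estimates on the explicit functions.

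Genericity is then handled by the standard observation that the exceptional values of $\epsilon$ — those for which a coordinate vanishes, acquires a pole, or escapes the unit disc — are the zeros of finitely many non-trivial $K$-analytic functions of $\epsilon$, hence form a proper closed (indeed finite) subset of $\mathds{D}_K^-$; outside it the map $F_\epsilon$ lands in $\bigl(\dot{\mathds{D}}_K^-\bigr)^3$, as required.

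I expect the genuinely hard step to be the second one: arranging all three strict inequalities $0<\absolute{t_1}_K,\absolute{t_2}_K,\absolute{\eta}_K<1$ at once. The conditions on $t_1$ and $t_2$ couple the reductions of $z_1$ and $z_2$ through $D\equiv 2$ and $P\equiv 1$ (so that $z_1,z_2$ are forced to be units), whereas $\absolute{\eta}_K<1$ demands the asymmetry $\absolute{z_2-1}_K<\absolute{z_1-1}_K$, which stands in visible tension with $\absolute{D}_K=\absolute{z_1-1}_K\,\absolute{z_2-1}_K=1$. Reconciling these congruences is precisely where the special value $y_2=-1$ and the resulting simplified matrix for $w$ must be exploited, and where the reduction-theoretic bookkeeping (with a separate accounting of the factor $\absolute{2}_K$ when the residue characteristic is $2$) has to be carried out with care. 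Once a residue configuration compatible with all three constraints has been identified, the remaining estimates and the genericity argument are routine.
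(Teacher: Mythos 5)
Your proposal takes the opposite route from the paper --- you solve (\ref{lineq}) \emph{away} from the critical locus (\ref{criticalCurve}) and hope to meet the size conditions there --- and this route is not merely hard at the step you defer; it is impossible. With your (correct) formulas $t_1=(D-2)/D$, $t_2=(D-2P)/D$, where $D=(z_1-1)(z_2-1)$ and $P=z_1z_2$, the ultrametric inequality forces the following chain. First, $\absolute{t_1}_K<1$ means $\absolute{D-2}_K<\absolute{D}_K$, which can only happen if $\absolute{D}_K=\absolute{2}_K$ (otherwise $\absolute{D-2}_K=\max\mathset{\absolute{D}_K,\absolute{2}_K}\ge\absolute{D}_K$) and $D/2\equiv 1$ modulo the maximal ideal. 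Next, $\absolute{t_2}_K<1$ means $\absolute{D-2P}_K<\absolute{D}_K=\absolute{2}_K$, i.e.\ $P\equiv D/2\equiv 1$, so in particular $\absolute{z_1}_K\absolute{z_2}_K=1$. Now if $\absolute{z_1}_K\neq\absolute{z_2}_K$, one of them exceeds $1$ and then $\absolute{D}_K=\max\mathset{\absolute{z_1}_K,\absolute{z_2}_K}>1\ge\absolute{2}_K$, a contradiction; so $\absolute{z_1}_K=\absolute{z_2}_K=1$, and (\ref{condz1z2}) (your condition $\absolute{\eta}_K<1$) reads $\absolute{z_2-1}_K<\absolute{z_1-1}_K\le 1$. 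This gives $z_2\equiv 1$, and combined with $P\equiv 1$ also $z_1\equiv 1$, hence $\absolute{D}_K=\absolute{z_1-1}_K\absolute{z_2-1}_K<\absolute{2}_K$: for odd residue characteristic because $\absolute{2}_K=1$ while $\absolute{D}_K<1$, and for $p=2$ (recall $K/\mathds{Q}_p$ is unramified, so the value group is $p^{\mathds{Z}}$) because each factor is at most $\absolute{2}_K$, making the product at most $\absolute{2}_K^2$. Either way this contradicts $\absolute{D}_K=\absolute{2}_K$. So no choice of residues and no curve $\epsilon\mapsto(z_1(\epsilon),z_2(\epsilon))$ avoiding (\ref{criticalCurve}) can satisfy all three conditions at once: the set from which you want to carve your one-parameter family is empty, and the ``visible tension'' you flag is an actual contradiction, not bookkeeping to be reconciled.

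This is exactly why the paper's proof works \emph{on} the critical locus rather than off it. On (\ref{criticalCurve}) the system (\ref{lineq}) has rank one, so $(t_1,t_2)$ is not a function of $(z_1,z_2)$ but carries a free affine parameter $s$; this extra degree of freedom (absent in your setup, where $(z_1,z_2)$ pins down $(t_1,t_2)$) is what allows the size conditions to be met. The paper then imposes $\absolute{z_1+z_2}_K<1$ and $s\in\dot{\mathds{D}}_K^-$, writes $z_1+z_2=\epsilon$, and obtains $z_1,z_2$ as the roots of the quadratic (\ref{quadratic}); the parameter of the family is this $\epsilon$. In short, the degeneracy of (\ref{lineq}) is not an exceptional locus to be avoided by genericity --- it is the source of the one-parameter family the lemma asserts, and any proof must pass through it.
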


\begin{proof}
  In the case of (\ref{criticalCurve}), the system (\ref{lineq}) reduces to its
  first equation. As $z_1z_2\neq 0$, we can write the solution as
  \[
  \begin{pmatrix}t_1\\t_2\end{pmatrix}
    =\begin{pmatrix}\frac{z_1z_2-1}{z_1z_2}\\0\end{pmatrix}
    +s\begin{pmatrix}\frac{1}{z_1z_2}\\1\end{pmatrix}
    \]
    with $s\in K$. In order to have $(t_1,t_2)\in\left(\dot{\mathds{D}}^-_K\right)^2$, it has to hold true that
    \[
\absolute{z_1+z_2}_K<1\quad\text{and}\quad s\in\dot{\mathds{D}}_K^-
\]
Intersecting this with the curve given by (\ref{criticalCurve}), this means that
 we have the equations
\begin{align*}
  z_1+z_2&=\epsilon\\
  z_1z_2&=1+\epsilon
\end{align*}
with $\absolute{\epsilon}_K<1$. These equations are equivalent to the quadratic equation
\begin{align}\label{quadratic}
z_1^2-\epsilon z_1+(1+\epsilon)=0
\end{align}
parametrised by $\epsilon$. For generic $\epsilon\in\mathds{D}^-$,
there are two distinct solutions of (\ref{quadratic}). This proves the assertion.
\end{proof}

The eigenvalues of the operator $J_\tau$ with $\tau\in\Aut\mathfrak{G}_c$
for a fixed eigenfunction are given as
\[
\lambda_\tau=\norm{p^\rho\tau(t_1,-1,t_2)}_K
\]
for $\rho>>0$ \cite[Proof of Lemma 12.3]{XKZ} (cf.\ also Proposition \ref{ev+ef}). In our case, we have
\[
\lambda_{\sigma}=\norm{p^\rho\sigma(t_1-1,t_2)}_K=\norm{p^\rho(t_1,\eta,t_1t_2)}_K
<\norm{p^\rho(t_1,-1,t_2)}_K=\lambda_{\id}
\]
From this, it follows that
\[
\Spec H_{\mathfrak{G}_c}\not\subset p^{\mathds{Z}}-\lambda
\]
for this particular reduction graph $\mathfrak{G}_c$.


\subsection{The case of genus $g\ge 3$}

Our main result concerns the case where the finite group $G$ is
the automorphism group $\Aut\mathfrak{G}$ of a stable graph $\mathfrak{G}$.
Remember that the action of $\Aut\mathfrak{G}$ has been extended to all of $K^N$ as outlined in Section \ref{extension}. 

\smallskip
First, we state the following observation which is valid also for $g=2$:
    \begin{prop}\label{niceAut}
Let $\mathfrak{G}$ be a stable graph of genus $g\ge 2$.
      If
      \[
\Aut\mathfrak{G}\cdot W^{\pm} = W^{\pm}
\]
then
\[
\Spec H_{\mathfrak{G}}\subset p^{\mathds{Z}}-\lambda
\]
    \end{prop}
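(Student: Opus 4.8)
The plan is to show that the hypothesis $\Aut\mathfrak{G}\cdot W^\pm = W^\pm$ forces every $\sigma\in\Aut\mathfrak{G}$ to act on the coordinate tuples of $\mathcal{F}(\mathfrak{G})$ as a norm-preserving map, and then to deduce the spectral containment from Proposition \ref{ev+ef}. First I would recall that a basis element $w_i\in W$ corresponds, via the Gerritzen coordinates, to a fixed-point/multiplier triple $(x_i,y_i,t_i)$, and that the multiplier $t_i$ of a hyperbolic M\"obius transformation satisfies $|t_i^{-1}|_K = |t_i|_K$ while the fixed points lie in $\mathds{D}_K$. The key point is that the hypothesis says each $\sigma$ permutes $W^\pm$, i.e.\ $\sigma(w_i)=w_{j}^{\pm1}$ for some $j$, rather than sending a generator to a longer conjugated word as in the Gerritzen normal form from the theorem above. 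Passing to inverses leaves the multiplier unchanged in absolute value (a generator and its inverse share the same multiplier up to inversion, hence the same $K$-norm), so the induced action on the $t$-coordinates is merely a permutation composed with $t\mapsto t^{-1}$, both of which preserve $\|\cdot\|_K$.

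Next I would make precise that $\sigma$ therefore acts on the full coordinate tuple $x=(0,1,t_1;\infty,y_2,t_2;\dots)$ by an isometry for $\|\cdot\|_K$, after the normalising M\"obius transformation that brings the image back into standard form. Concretely, because no generator is sent to a genuinely longer word, there is no analogue of the drop $\|\sigma x\|_K<\|x\|_K$ that was exhibited in the genus-$2$ case (c); the multipliers are only rearranged and inverted, and the fixed points stay within $\mathds{D}_K$, so $\|\sigma x\|_K=\|x\|_K$ for all $x\in\mathcal{F}(\mathfrak{G})$, and by the extension of $\sigma$ to all of $K^N$ described in Section \ref{extension} this persists on the rescaled region $B_\rho(0)$.

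With norm-preservation in hand, the spectral statement follows from Proposition \ref{ev+ef}. For a fixed eigenfunction $\omega_{\gamma bk}$ the eigenvalue of $J_\sigma$ is $\tilde f(p^{1-\gamma_\sigma})$, where $\gamma_\sigma$ is determined by $\|\sigma(p^\gamma(p^{-1}k+\eta))\|_K$. Since $\sigma$ preserves the norm, $\gamma_\sigma=\gamma$ for every $\sigma\in G$, so all the operators $J_\sigma$ share the same eigenvalue on $\omega_{\gamma bk}$. By the Corollary following Lemma \ref{commute}, the eigenvalue of $H_{\mathfrak{G}}=\frac{1}{|G|}\sum_\sigma H_\sigma$ is the average of these, hence equals the common value $\tilde f(p^{1-\gamma})-\lambda$, which lies in $p^{\mathds{Z}}-\lambda$ because $\tilde f$ takes values in $p^{\mathds{Z}}$ on $p^{\mathds{Z}}$. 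Therefore $\Spec H_{\mathfrak{G}}\subset p^{\mathds{Z}}-\lambda$.

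The main obstacle I anticipate is the precise bookkeeping in the middle step: verifying that the normalising M\"obius transformation needed to return $\sigma(x)$ to the standard coordinate form $(0,1,t_1;\infty,y_2,t_2;\dots)$ does not alter the $K$-norm of the multiplier coordinates, and that inverting a generator genuinely fixes $|t_i|_K$. In the genus-$2$ case (c) the failure of the containment came exactly from such a M\"obius renormalisation producing a smaller multiplier $t_1t_2$; here the hypothesis $\Aut\mathfrak{G}\cdot W^\pm=W^\pm$ is what rules this out, but one must check that permuting and inverting the lasso loops really corresponds to an isometric action on coordinates, which requires tracking the fixed points through $\beta$-type transformations and confirming they remain in $\mathds{D}_K$.
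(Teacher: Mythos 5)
Your proof is correct and follows essentially the same route as the paper: the hypothesis forces the eigenvalues of all the $H_\sigma$ on a fixed common eigenfunction to coincide, and the containment then follows by averaging via the simultaneous-diagonalisation corollary to Lemma \ref{commute}. The paper's own proof is just a terse three-line version of this, so your norm-preservation argument supplies details the paper leaves implicit --- with the one caveat that inversion preserves the multiplier coordinate only after renormalising to the standard form $0<\absolute{t_i}_K<1$ (literally, $\absolute{t_i^{-1}}_K=\absolute{t_i}_K^{-1}$, not $\absolute{t_i}_K$).
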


    \begin{proof}
If $\Aut\mathfrak{G}\cdot W^{\pm}=W^{\pm}$, then the eigenvalues of $H_\sigma$ corresponding to a fixed eigenfunction are equal for all $\sigma\in \Aut\mathfrak{G}$. By our choice of $f$, it holds true that
      \[
\Spec H_\sigma\subset q^{\mathds{Z}}-\lambda
\]
Hence, the assertion follows by averaging.
\end{proof}

    In order to find necessary conditions for the spectrum of $J_{\mathfrak{G}}$ being prime powers, we need to make some definitions.

\smallskip
Let $H$ be a finite graph, and let $T$ be a spanning tree of $H$. The  tree $T^*$
obtained by replacing each edge $e$ of $H$  not in $T$ by two half-edges attached to the endpoints of $e$, is called the \emph{$*$-tree} of $T$ in $H$.

\smallskip
Let $v$ be a vertex in $H$, and $I$ a subgraph of $H$. The set of edges of $I$ is denoted by $E(I)$. Then we define
\[
\deg_I(v):=\absolute{\mathset{e\in E(I)\colon \text{$e$ is attached to $v$}}}
\]
If $K$ is sufficiently large, and $\mathfrak{G}$ is a stable graph,
then one can embed $T^*$ into the Bruhat-Tits tree for $K$, if $T$ is a spanning tree of $\mathfrak{G}$. The half-lines of $T^*$ then correspond to some elements of $\mathds{P}^1(K)$.

\bigskip
    The following example shows that the converse of Proposition \ref{niceAut} does not hold true:

    \begin{exa}\label{notNice}
      The following graph $\mathfrak{G}$ satisfies $\Aut\mathfrak{G}\cdot W^{\pm}\neq W^{\pm}$, but
      \[
      \Spec H_{\mathfrak{G}}\subset p^{\mathds{Z}}-\lambda
      \]
      Namely,
      \[
      \xymatrix{
        *\txt{$\bullet$}\ar@{-}[r]\ar@{-}@/_/[r]\ar@{-}@/^/[r]\ar@{-}@(ul,dl)&
        *\txt{$\bullet$}\ar@{-}@(dr,ur)
      }
      \]
      The reason is that it contains a $*$-tree  $T^*$ of the following shape:
      \[
      \xymatrix@=10pt{
      &&&&&\\
      &*\txt{$\bullet$}\ar@{-}[rrr]\ar@{-}[ul]\ar@{-}[dl]\ar@{-}[ur]\ar@{-}[dr]
      &&&*\txt{$\bullet$}\ar@{-}[ul]\ar@{-}[dl]\ar@{-}[ur]\ar@{-}[dr]&\\
        &&&&&
      }
      \] 
and any embedding of $T^*$ into the Bruhat-Tits tree such that the half-lines correspond to $0,1,\infty$ or points in the unit disk, will necessarily lead to 
to a half-line corresponding to a point of $K$ having absolute value $1$. The reason is that the absolute value of a point $x\in K$ depends on the distance between the vertex
$v(0,\infty,x)$  in $\mathcal{T}_K$ determined by the three points $0,\infty, x$,
and the vertex $v(0,1,\infty)$.
Namely, first assume that $0$ and $\infty$ are attached to the same vertex. In this case, all other half-lines correspond to points $x$
with $v(0,\infty,x)=v(0,1,\infty)$. Hence the absolute value of $x$ equals to one.
In the other case, $1$ and either $0$ or $\infty$ are connected to the same vertex $v$. The remaining half-line $x$ connected $v$ then has absolute value, as
again $v(0,\infty,x)=v(0,1,\infty)$.

\smallskip
Hence, it is not possible to find an automorphism $\sigma\in\Aut\mathfrak{G}$ such that
\[
\norm{\sigma x}_K<\norm{\sigma x}
\]
This proves that the eigenvalues of $H_\sigma$ corresponding to the same eigenfunction are all equal. This proves the assertion.
    \end{exa}

    \begin{dfn}
Let $H$ be a finite graph. A \emph{mouth} is a subgraph which looks like in Figure \ref{exaq} such that the three paths between the two highlighted vertices have equal positive length. These two vertices are called the \emph{corners} of the mouth.
    \end{dfn}
    
      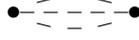
\begin{figure}
  \[
  \xymatrix@=40pt{
*\txt{$\bullet$}\ar@{--}[r]\ar@{--}@/_/[r]\ar@{--}@/^/[r]&
*\txt{$\bullet$}
  }
  \]
  \caption{A mouth-shaped graph.
        The broken lines depict paths of equal length.}\label{exaq}
  \end{figure}



\begin{cor}\label{condq}
  If the graph $\mathfrak{G}$ has no
  mouth, then
  \[
\Spec H_{\mathfrak{G}}\subset p^{\mathds{Z}}-\lambda
      \]
    \end{cor}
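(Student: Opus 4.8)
The plan is to establish the contrapositive of the structural obstruction identified in the worked examples, namely that the spectrum fails to be contained in $p^{\mathds{Z}}-\lambda$ precisely when one can find an automorphism $\sigma\in\Aut\mathfrak{G}$ and a coordinate point $x\in\mathcal{F}(\mathfrak{G})$ with $\norm{\sigma x}_K<\norm{x}_K$, producing two distinct eigenvalues $\lambda_\sigma<\lambda_{\id}$ attached to the same eigenfunction (as was done explicitly for $\mathfrak{G}_c$ in Section \ref{g=2}). By Proposition \ref{ev+ef}, the eigenvalues of $J_\sigma$ for a fixed eigenfunction are $\tilde{f}(\norm{\sigma x}_K)$ read off from the coordinate tuple, so the whole question reduces to whether every $\sigma\in\Aut\mathfrak{G}$ preserves the norm $\norm{x}_K$ on the relevant coordinates. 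The goal is therefore to show: \emph{if $\mathfrak{G}$ has no mouth, then for every $\sigma$ the norm is preserved, hence all eigenvalues coincide and averaging keeps the spectrum inside $p^{\mathds{Z}}-\lambda$.}

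First I would translate the norm-lowering phenomenon into graph-combinatorial language using the $*$-tree $T^*$ and the Bruhat-Tits picture developed in Example \ref{notNice}. The coordinates $t_i,x_i,y_i$ are fixed points and multipliers of the lasso generators, and their absolute values are determined by distances in $\mathcal{T}_K$ between the vertex $v(0,1,\infty)$ and the vertices $v(0,\infty,x)$ cut out by triples of half-lines of the embedded $T^*$. The key geometric fact, already exploited in Example \ref{notNice}, is that a coordinate can drop below absolute value $1$ only when an automorphism can move a half-line of $T^*$ to a branch vertex strictly farther from $v(0,1,\infty)$ than the original — and this requires three internally disjoint paths of equal length joining two vertices, i.e.\ exactly a mouth. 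So the main lemma to prove is: \emph{an automorphism $\sigma$ with $\norm{\sigma x}_K<\norm{x}_K$ for some $x\in\mathcal{F}(\mathfrak{G})$ exists if and only if $\mathfrak{G}$ contains a mouth.} I would prove the ``only if'' direction here, since that is what the corollary needs: absent a mouth, the three equal-length parallel paths that the calculation in Section \ref{g=2} crucially used (the configuration $w_2\mapsto w_2^{-1}w_1$, which is precisely what a mouth supports) cannot arise, so no automorphism can realise the strict inequality $\absolute{\eta}_K<1$ that produced distinct eigenvalues.

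The argument then concludes by invoking the eigenvalue formula: for every $\sigma$ we get $\lambda_\sigma=\tilde f(\norm{\sigma x}_K)=\tilde f(\norm{x}_K)=\lambda_{\id}$ on each fixed eigenfunction, so the averaged operator $H_{\mathfrak{G}}=\frac{1}{\absolute{G}}\sum_\sigma H_\sigma$ has, by the Corollary following Lemma \ref{commute}, eigenvalues equal to that common value minus $\lambda$, landing in $q^{\mathds{Z}}-\lambda\subset p^{\mathds{Z}}-\lambda$. This is the same averaging step as in Proposition \ref{niceAut}, and I would cite that proposition's mechanism directly rather than redo it.

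The hard part will be the ``only if'' direction of the main lemma: carefully ruling out \emph{every} way an automorphism could lower a norm when no mouth is present. One must handle automorphisms that permute edges outside $T$, that fix vertices, and that act nontrivially on the geometric basis via the conjugation form $\sigma(w_i)=w_{t_1}\cdots w_{t_s}w_{j_1}\cdots w_{j_r}w_{t_s}^{-1}\cdots w_{t_1}^{-1}$ given by Gerritzen's theorem; the conjugating syllables are what change fixed points, and I would need to show that in the absence of three equal-length parallel paths the resulting M\"obius transformation always keeps its fixed points and multiplier at absolute value matching the original tuple. Pinning down exactly which half-line configurations force a coordinate strictly below $1$ — and confirming that these configurations are \emph{precisely} mouths rather than some larger class — is the delicate combinatorial-geometric step, and is where the equal-length hypothesis in the definition of a mouth does the essential work.
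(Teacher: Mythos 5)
Your overall chain --- no mouth implies every automorphism preserves the relevant norms on $\mathcal{F}(\mathfrak{G})$, hence all $H_\sigma$ share eigenvalues on each common eigenfunction (Proposition \ref{ev+ef}), hence the average $H_{\mathfrak{G}}$ has spectrum in $p^{\mathds{Z}}-\lambda$ --- is sound, and it ends with exactly the paper's averaging step. But your route to the middle of that chain is genuinely different from, and much heavier than, what the paper does for this corollary. The paper's proof is one line: the absence of a mouth is equivalent to the purely combinatorial statement that every geometric basis $W$ satisfies $\Aut\mathfrak{G}\cdot W^{\pm}=W^{\pm}$, and then Proposition \ref{niceAut} applies verbatim; no norms, fixed points, M\"obius transformations or tree embeddings enter at all. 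The norm-geometric analysis you propose (half-lines of $T^*$, distances to $v(0,1,\infty)$, Gerritzen's conjugation normal form) is precisely the machinery the paper reserves for Example \ref{notNice} and for the proof of the main theorem, where it is genuinely needed; deployed here, it amounts to re-proving Proposition \ref{niceAut} from scratch, and the step you yourself flag as the hard part is left unexecuted in your plan, whereas the paper's decomposition makes the corollary immediate once that proposition is in place.

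One of your side claims is false, and it matters because your plan includes ``confirming'' it: the biconditional \emph{a norm-lowering automorphism exists if and only if $\mathfrak{G}$ contains a mouth}, equivalently your closing assertion that the norm-lowering configurations are ``precisely mouths rather than some larger class''. Example \ref{notNice} refutes the ``if'' direction: that graph contains a mouth (the three parallel edges), and indeed $\Aut\mathfrak{G}\cdot W^{\pm}\neq W^{\pm}$ there, yet the tree geometry forces every half-line to land at absolute value $1$, no automorphism lowers any norm, and $\Spec H_{\mathfrak{G}}\subset p^{\mathds{Z}}-\lambda$. The correct class, by the paper's main theorem, is the strictly smaller one of mouths having a corner $v$ which is a tip of the spanning tree $T$ with $\deg_{\mathfrak{G}}(v)-\deg_T(v)\le 2$. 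Your proof of the corollary survives this, because you only use the ``only if'' direction (norm-lowering implies the existence of a mouth), which is consistent with the paper; but you should state only that direction, since the converse you intend to verify is not true.
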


    \begin{proof}
As the mouth condition is equivalent to the fact that any geometric basis $W$ satisfies
      \[
\Aut\mathfrak{G}\cdot W^{\pm}=W^{\pm}
\]
the assertion follows immediately from Proposition \ref{niceAut}.
  \end{proof}


    \begin{rem}
   The mouth-shaped graph of Figure \ref{exaq}, in which the horiontal paths are all edges, 
      has an automorphism which does not permute a given basis of its fundamental group. For example, if we choose the two inner loops as such a basis $W$, then a  graph automorphism $\sigma$  can be
      defined which maps the upper inner loop to the outer loop and the lower inner loop to itself. This $\sigma$ clearly satisfies
      $\sigma W^{\pm}\neq W^{\pm}$. 
    \end{rem}

    We now can state our main result:
    
    \begin{thm}
      Let $\mathfrak{G}$ be a stable graph of genus $g\ge 3$, and let $T$ be a spanning tree.
Then $\mathfrak{G}$ has a mouth with a corner $v$ being a tip of $T$ with
  \[
\deg_{\mathfrak{G}}(v)-\deg_T(v)\le 2
\]
if and only if
\[
\Spec H_{\mathfrak{G}}\not\subset p^{\mathds{Z}}-\lambda
\]
\end{thm}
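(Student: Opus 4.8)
The plan is to establish both directions of the equivalence by reducing the spectral condition to a combinatorial condition on geometric bases, building on the machinery of Proposition~\ref{niceAut} and the genus-$2$ analysis in Section~\ref{g=2}. The key translation is this: by Proposition~\ref{ev+ef}, the eigenvalues of $J_\sigma$ on a fixed eigenfunction are determined by $\norm{\sigma x}_K$ for representative points $x\in\mathcal{F}(\mathfrak{G})$, so $\Spec H_{\mathfrak{G}}\not\subset p^{\mathds{Z}}-\lambda$ holds precisely when there is some $\sigma\in\Aut\mathfrak{G}$ and some point $x$ in (the closure of) $\mathcal{F}(\mathfrak{G})$ with $\norm{\sigma x}_K<\norm{x}_K$, producing two distinct eigenvalues $\lambda_{\id}\neq\lambda_\sigma$ on a common eigenfunction. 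By the contrapositive of Proposition~\ref{niceAut} (together with the remark following Corollary~\ref{condq}), this in turn is controlled by whether $\Aut\mathfrak{G}\cdot W^{\pm}=W^{\pm}$ fails \emph{in a way that is actually witnessed by a norm drop}. The whole theorem therefore comes down to characterising when such a norm-dropping automorphism exists in terms of the mouth/tip-of-$T$ condition.

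\smallskip
For the forward direction, I would assume $\mathfrak{G}$ contains a mouth whose corner $v$ is a tip of $T$ with $\deg_{\mathfrak{G}}(v)-\deg_T(v)\le 2$. First I would isolate the two edges of the mouth lying outside $T$; these correspond to two generators $w_i,w_j$ of a geometric basis via the lasso-loop construction, and the mouth-automorphism $\sigma$ (the one swapping two of the three equal-length paths, as in the final Remark) sends $w_j\mapsto w_j^{-1}w_i$ or a similar word outside $W^{\pm}$, exactly mirroring the genus-$2$ computation in part (c) of Section~\ref{g=2}. The degree bound $\deg_{\mathfrak{G}}(v)-\deg_T(v)\le 2$ guarantees that at most two non-tree edges are attached at the tip $v$, so that the local coordinate picture at $v$ embeds into the $g=2$ model: I would reuse the explicit matrices $w_1\leftrightarrow\left(\begin{smallmatrix}-t_1&0\\1-t_1&-1\end{smallmatrix}\right)$, $w_2\leftrightarrow\left(\begin{smallmatrix}1&(t_2-1)y_2\\0&t_2\end{smallmatrix}\right)$ and the $1$-parameter family of the Lemma (Case $y_2=-1$) to exhibit a genuine point $x\in\mathcal{F}(\mathfrak{G})$ with $\norm{\sigma x}_K<\norm{x}_K$, giving $\lambda_\sigma<\lambda_{\id}$ and hence $\Spec H_{\mathfrak{G}}\not\subset p^{\mathds{Z}}-\lambda$.

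\smallskip
For the converse, I would argue the contrapositive: if \emph{no} mouth has a corner that is a tip of $T$ with the degree bound, then every candidate norm-dropping automorphism fails to drop the norm, exactly as in Example~\ref{notNice}. The mechanism is the $*$-tree criterion developed before Corollary~\ref{condq}: embedding $T^*$ into $\mathcal{T}_K$ forces the half-lines to correspond to points whose absolute values are pinned down by the distances between the vertices $v(0,\infty,x)$ and $v(0,1,\infty)$. Absent a mouth-corner-tip configuration with few outside edges, any $\sigma\in\Aut\mathfrak{G}$ that permutes edges either fixes $W^{\pm}$ setwise (so no norm drop occurs by Proposition~\ref{niceAut}), or else the constraint $v(0,\infty,x)=v(0,1,\infty)$ forces $\norm{\sigma x}_K=\norm{x}_K$ just as in Example~\ref{notNice}, so all eigenvalues on a common eigenfunction coincide and $\Spec H_{\mathfrak{G}}\subset p^{\mathds{Z}}-\lambda$.

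\smallskip
The main obstacle I anticipate is the converse direction, specifically making rigorous the claim that the \emph{only} way to obtain a norm drop is through a mouth with a tip-corner of small outside degree. One must rule out that some more complicated automorphism, acting on a larger piece of $\mathfrak{G}$, conjures a norm drop even when no such mouth exists; this requires a careful case analysis of how $\Aut\mathfrak{G}$ can move lasso loops, using the structure theorem (Gerritzen) that $\sigma(w_i)=w_{t_1}\cdots w_{t_s}w_{j_1}\cdots w_{j_r}w_{t_s}^{-1}\cdots w_{t_1}^{-1}$ to show that a word of this conjugated form can have strictly smaller norm only when the supporting subgraph is a mouth attached at a tip of $T$ by at most two non-tree edges. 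Pinning down this combinatorial equivalence — and in particular the precise role of the degree bound $\deg_{\mathfrak{G}}(v)-\deg_T(v)\le 2$ in forcing the embedding to land in the genus-$2$ model rather than a configuration where all half-lines keep absolute value one — is where the real work lies.
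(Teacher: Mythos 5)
Your proposal follows essentially the same route as the paper: both reduce the spectral condition to the existence of a norm-dropping automorphism via Proposition \ref{ev+ef}, the forward direction re-embeds the genus-$2$ machinery (the $y_2=-1$ matrices, the M\"obius map $\beta$, and the fixed points $z_1,z_2$) into $\mathcal{F}(\mathfrak{G})$ using the mouth/tip/degree hypothesis, and the converse splits into the no-mouth case (Proposition \ref{niceAut}) and the mouth-without-degree-condition case (the $*$-tree embedding argument of Example \ref{notNice}). The paper's own proof is no more detailed than your sketch on precisely the points you flag as the remaining work, namely how the degree bound pins the $T^*$-configuration and why no other automorphism can produce a norm drop.
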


\begin{proof}
  $\Rightarrow$. Assume that $\mathfrak{G}$ has a mouth satisfying the degree condition.
  \smallskip
First observe that the two points $z_1,z_2$ from Section \ref{g=2} (the genus $2$ case), which are the solutions of equation (\ref{fpz1z2}), satisfy
  \[
\absolute{z_1+z_2}_K<1\quad\text{and}\quad\absolute{z_1z_2}_K=1
\]
as can be seen from (\ref{lineq0}).
This implies
\begin{align}\label{importanteq}
\absolute{z_1}_K=\absolute{z_2}_K
\end{align}
Further, we have
\begin{align}\label{importantineq}
\absolute{z_1-1}_K>\absolute{z_2-1}_K
\end{align}
from (\ref{condz1z2}). Again, we choose $y_2=-1$. By the mouth condition, $T^*$ contains the following subtrees:
\[
\xymatrix@=25pt{
  \infty\ar@{-}[dr]&&&1\ar@{-}[dl]\\
  &v\ar@{--}[r]^{e_i,f_i}&*\txt{$\bullet$}\\
  0\ar@{-}[ur]&&&x_i,y_i\ar@{-}[ul]
}
\]
with all paths $e_i,f_i$ having a common initial edge $e$ for $i=3,\dots,g$.
We now may choose $x_i,y_i$ such that the configuration within $T^*$ looks like this:
\[
\xymatrix@=25pt{
  \infty\ar@{-}[dr]&&z_1&z_2&&1\ar@{-}[dl]\\
  &v\ar@{--}[rrr]&*\txt{$\bullet$}\ar@{-}[u]&*\txt{$\bullet$}\ar@{-}[u]&*\txt{$\bullet$}\\
  0\ar@{-}[ur]&&&&&x_i,y_i\ar@{-}[ul]
}
\]
We now compute that
\[
\absolute{\beta(\xi)}_K<1
\]
for $\xi=x_i$ or $\xi=y_i$, $i=3,\dots,g$. Namely,
\begin{align*}
  \absolute{\beta(\xi)}_K&=\absolute{\frac{(z_2-1)\xi+z_1(1-z_2)}{(z_1-1)\xi+z_2(1-z_1)}}_K
  =\absolute{\frac{z_1}{z_2}}_K\absolute{\frac{1-z_2}{1-z_1}}_K<1
\end{align*}
where the second equality can be read off the tree above, and
the inequality follows from
(\ref{importanteq}) and (\ref{importantineq}).

\smallskip
We have now shown that there exist $x\in \mathcal{F}(\mathfrak{G})$ such that
\[
\norm{\sigma x}_K<\norm{x}_K
\]
Hence, there are differing eigenvalues $\lambda_{\id}$, $\lambda_\sigma$ of $H_{\id}$, $H_\sigma$, respectively, corresponding to the same eigenfunction.
This implies that their average is not of the form $p^m-\lambda$ with $m\in\mathds{Z}$. This proves the assertion.

\smallskip\noindent
$\Leftarrow$. Now, assume that $\mathfrak{G}$ does not have a mouth satisfying the degree condition. If $\mathfrak{G}$ does not have any mouth, then it holds true that
\[
\Aut\mathfrak{G}\cdot W^{\pm}=W^{\pm}
\]
for any geometric basis $W$ of $F_g$. Hence, by Proposition \ref{niceAut}, it follows that
\[
\Spec H_{\mathfrak{G}}\subset p^{\mathds{Z}}-\lambda
\]
If $\mathfrak{G}$ does have a mouth, this corresponds to a subtree of $T^*$ which contains a $*$-tree like in Example \ref{notNice}, and we conclude like in that example that
\[
\Spec H_{\mathfrak{G}}\subset p^{\mathds{Z}}-\lambda
\]
This proves the assertion.
\end{proof}

\section*{Acknowledgements}
Klaudia Oleshko is thanked for giving inspiration to pursue this work.
Wilson Z\'{u}\~{n}iga-Galindo is thanked for his willingness to proof-read
a previous version of the  manuscript and for helpful discussions and ideas, including references to the literature. Frank Herrlich is thanked for asking about the motivation for this work which lead to coming up with the idea for the main result. Vladimir Anashin is thanked for indicating a way for removing some technical condition.

\bibliographystyle{plain}
\bibliography{biblio}

\end{document}